\documentclass[a4paper,11pt]{article}
\usepackage[T1]{fontenc}
\usepackage{lmodern} 
\usepackage[utf8]{inputenc}
\usepackage[english]{babel}
\usepackage[margin=0.8in]{geometry}
\usepackage{microtype}
\usepackage{braket}
\usepackage{amsmath}
\usepackage{amssymb}
\usepackage{amsthm, aliascnt}
\usepackage{mathtools}
\usepackage{mathrsfs}
\usepackage{xcolor}
\usepackage[hyphens]{url}
\usepackage{hyperref}
\usepackage{enumitem}
\usepackage{tikz}
\usepackage[maxbibnames=99]{biblatex}
\usepackage{csquotes}
\usetikzlibrary{patterns}
\usepackage{tikz}
\usepackage{esint}
\usepackage{mleftright}

\newcommand{\al}{\alpha}
\newcommand{\la}{\lambda}
\newcommand{\si}{\sigma}
\newcommand{\La}{\Lambda}

\newcommand{\De}{\Delta}
\newcommand{\na}{\nabla}
\newcommand{\be}{\beta}
\newcommand{\ga}{\gamma}
\newcommand{\Ga}{\Ga}
\newcommand{\vp}{\varphi}

\newcommand{\pa}{\partial}

\newcommand{\ml}{\mleft}
\newcommand{\mr}{\mright}

\usepackage{xpatch}
\makeatletter
\newcounter{proofpart}
\xpretocmd{\proof}{\setcounter{proofpart}{0}}{}{}
\newcommand{\proofpart}[1]{%
  \par
  \addvspace{\medskipamount}%
  \stepcounter{proofpart}%
  \noindent\emph{Step \theproofpart: #1}\par\nobreak\smallskip
  \@afterheading
}
\makeatother
\hypersetup{
                colorlinks=true,
                linkcolor={magenta},
                citecolor={cyan},
                breaklinks=true}

\theoremstyle{plain}
\newtheorem{teor}{Theorem}
\numberwithin{teor}{section}
\numberwithin{equation}{section}

\theoremstyle{definition}
\newaliascnt{defi}{teor}

\aliascntresetthe{defi}

\theoremstyle{plain}
\newaliascnt{lemma}{teor}
\newtheorem{lemma}[lemma]{Lemma}
\aliascntresetthe{lemma}
\theoremstyle{plain}
\newaliascnt{prop}{teor}
\newtheorem{prop}[prop]{Proposition}
\aliascntresetthe{prop}
\theoremstyle{plain}
\newaliascnt{conjecture}{teor}

\aliascntresetthe{conjecture}
\theoremstyle{plain}
\newaliascnt{cor}{teor}

\aliascntresetthe{cor}
\theoremstyle{definition}
\newaliascnt{ex}{teor}

\aliascntresetthe{ex}
\theoremstyle{definition}
\newaliascnt{oss}{teor}
\newtheorem{oss}[oss]{Remark}
\aliascntresetthe{oss}
\theoremstyle{plain}

\DeclarePairedDelimiter{\abs}{\lvert}{\rvert}
\DeclarePairedDelimiter{\norma}{\lVert}{\rVert}

\newcommand{\R}{\mathbb{R}}

\newcommand{\N}{\mathbb{N}}

\DeclareMathOperator{\supp}{supp}

\title{On the buckling eigenvalue of unbounded cylinders}
\author{Paolo Acampora, Emanuele Cristoforoni, Carlo Nitsch, Cristina Trombetti}
\date{}

\newcommand{\Addresses}{{
 \bigskip 
 \footnotesize 

  \textsc{Dipartimento di Bioscienze e Territorio,
  Università degli Studi del Molise, Viale dell'universit\`a 3, 86090, Pesche (IS)
  .}\par\nopagebreak 

  \medskip 
 
 \textit{E-mail address}, P.~Acampora: \texttt{paolo.acampora@unimol.it} 

    \bigskip

        \textsc{Dipartimento di Matematica ``Federigo Enriques'', Universit\'a degli Studi di Milano La Statale, Via Saldini 50 20123 Milano Italia.}\par\nopagebreak 
 
 \medskip 
 
 \textit{E-mail address}, E.~Cristoforoni: \texttt{emanuele.cristoforoni@unimi.it} 

 \bigskip
 
 \textsc{Dipartimento di Matematica e Applicazioni ``R. Caccioppoli'', Universit\`a degli studi di Napoli Federico II, Via Cintia, Complesso Universitario Monte S. Angelo, 80126 Napoli, Italy.}\par\nopagebreak

 \medskip 
 
 \textit{E-mail address}, C.~Nitsch: \texttt{c.nitsch@unina.it}

  \medskip 
 
 \textit{E-mail address}, C.~Trombetti: \texttt{cristina@unina.it} 
 
}}

\begin{document}

\maketitle

\begin{abstract}
       We variationally characterize the bottom of the spectrum of the buckling problem in an infinite cylinder $A\times\R^{n-d}$, where $A$ is an open bounded subset of $\R^d$, and compute it explicitly when $A$ is a ball.    
       
\textsc{Keywords:} Fourth-order eigenvalue problem, Buckling eigenvalue, spectral analysis,  

\textsc{MSC 2020:} 35P15, 55J35, 31B30, 35B40
\end{abstract}
\section{Introduction}

For every open set $\Omega\subset\mathbb{R}^n$, the bottom of the spectrum for the buckling eigenvalue problem is defined variationally as
$$ \displaystyle \Lambda(\Omega)=\inf_{v\in C^\infty_c(\Omega)\setminus\{0\}}  \frac{\displaystyle\int_\Omega (\Delta v)^2\,dx}{\displaystyle\int_\Omega |\nabla v|^2\,dx}. $$
When $\Omega$ is bounded, $\Lambda(\Omega)$ corresponds exactly to the principal eigenvalue of the fourth-order boundary value problem
\[ \begin{cases}
    \Delta^2 v = -\Lambda \Delta v &\text{in }\Omega,\\[5pt]
    v=\partial_\nu v = 0 &\text{on }\partial \Omega,
\end{cases} \]
where $\partial_\nu v$ denotes the outer normal derivative on the boundary $\partial \Omega$. Similarly, we recall that the bottom of the spectrum for the Dirichlet Laplacian is given by
\[ \lambda(\Omega)=\inf_{u\in C^\infty_c(\Omega)\setminus\{0\}} \frac{\displaystyle\int_\Omega |\nabla u|^2\,dx}{\displaystyle\int_\Omega u^2\,dx }. \]

In this paper, we completely characterize the bottom of the spectrum of the buckling problem for unbounded cylinders of the form $\Omega = A\times\mathbb{R}^{n-d}$, where $A \subset \mathbb{R}^d$ is a bounded open set. It is a well-known fact that for the Dirichlet problem, the bottom of the spectrum in a cylinder reduces precisely to the principal eigenvalue of its bounded section; that is, $\lambda(A\times\mathbb{R}^{n-d})=\lambda(A)$. Strikingly, the analogous property does not hold for the buckling problem.

To formulate our main result, for every $\mu\ge 0$ let $\Lambda_\mu(A)$ be the first eigenvalue of the modified boundary value problem
\[ \begin{cases}
     (\Delta-\mu)^2 u = - \Lambda (\Delta-\mu)u &\text{in }A,\\[5pt]
    u=0 &\text{on }\partial A,\\[5pt]
    \partial_\nu u=0 &\text{on }\partial A,
\end{cases} \]
which is variationally characterized by
\[ \Lambda_\mu(A)=\min_{h\in H^2_0(A)\setminus\{0\}} \frac{\displaystyle\int_A(\Delta h -\mu h)^2\,dx}{\displaystyle\int_A|\nabla h|^2+\mu h^2\,dx}. \]
Our first theorem establishes that the buckling eigenvalue of a cylinder is obtained by minimizing $\Lambda_\mu(A)$ over all possible frequency shifts $\mu \ge 0$.

\begin{teor}\label{main}
    Let $0<d<n$ and let $A\subset\mathbb{R}^d$ be a bounded open set. Then
    \[ \Lambda(A\times\mathbb{R}^{n-d})=\min_{\mu\ge0}\Lambda_\mu(A). \]
\end{teor}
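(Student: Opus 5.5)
The plan is to pass to the Fourier transform in the unbounded variables. Write $x=(x',y)$ with $x'\in A$ and $y\in\mathbb{R}^{n-d}$, and let $\hat v(x',\xi)$ be the partial Fourier transform in $y$. By Plancherel, for $v\in C^\infty_c(A\times\mathbb{R}^{n-d})$,
\[ \int (\Delta v)^2\,dx=\int_{\mathbb{R}^{n-d}}\int_A \bigl|\Delta_{x'}\hat v-|\xi|^2\hat v\bigr|^2\,dx'\,d\xi, \qquad \int|\nabla v|^2\,dx=\int_{\mathbb{R}^{n-d}}\int_A |\nabla_{x'}\hat v|^2+|\xi|^2|\hat v|^2\,dx'\,d\xi . \]
For each fixed $\xi$, the function $\hat v(\cdot,\xi)$ is smooth and compactly supported in $A$, so it lies in $H^2_0(A)$ with complex values. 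Since the operator $(\Delta-\mu)$ has real coefficients, the Rayleigh quotient defining $\Lambda_\mu(A)$ can be applied separately to the real and imaginary parts. With $\mu=|\xi|^2$, this gives the fiberwise inequality
\[ \int_A|\Delta\hat v-\mu\hat v|^2\ge \Lambda_{\mu}(A)\int_A|\nabla\hat v|^2+\mu|\hat v|^2 . \]
Integrating in $\xi$ yields $\Lambda(A\times\mathbb{R}^{n-d})\ge\inf_{\mu\ge0}\Lambda_\mu(A)$.

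For the upper bound, fix $\mu_0\ge0$ and an eigenfunction $h\in H^2_0(A)$ of $\Lambda_{\mu_0}(A)$. I would build test functions of the form $v_R(x',y)=h(x')\,\mathrm{Re}\bigl(e^{i\omega\cdot y}\bigr)\,\phi(y/R)$, where $|\omega|^2=\mu_0$ and $\phi\in C^\infty_c$ is a fixed cutoff. Equivalently, one can take a Gaussian-type bump whose Fourier transform concentrates near $|\xi|^2=\mu_0$. Density of $C^\infty_c(A)$ in $H^2_0(A)$ lets us replace $h$ by smooth approximants.

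To evaluate the quotient of $v_R$, expand $\Delta v_R=(\Delta_{x'}h)\psi_R+h\,\Delta_y\psi_R$, where $\psi_R=\cos(\omega\cdot y)\phi(y/R)$. We have $\Delta_y\psi_R=-\mu_0\psi_R+O(R^{-1})$, with the error term carrying cutoff derivatives. The numerator is then $\|\psi_R\|_2^2\int_A(\Delta h-\mu_0h)^2+o(R^{n-d})$. Similarly, the denominator is $\|\psi_R\|^2\int|\nabla h|^2+\|\nabla\psi_R\|^2\int h^2$, and $\|\nabla\psi_R\|^2=\mu_0\|\psi_R\|^2+o(R^{n-d})$ because $\cos^2$ averages to $1/2$ as do $\sin^2$. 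The ratio therefore tends to $\Lambda_{\mu_0}(A)$. The case $\mu_0=0$ works the same way with $\omega=0$. This gives $\Lambda(A\times\mathbb{R}^{n-d})\le\inf_\mu\Lambda_\mu(A)$.

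The main obstacle is showing that the infimum over $\mu$ is attained, i.e.\ that it is a minimum. This requires two facts.

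First, $\mu\mapsto\Lambda_\mu(A)$ must be continuous on $[0,\infty)$. Upper semicontinuity is immediate, since it is an infimum of functions continuous in $\mu$. For lower semicontinuity, take normalized minimizers $h_\mu$ with $\int|\nabla h_\mu|^2+\mu h_\mu^2=1$. Their $H^2$ norms are bounded on compact $\mu$-ranges, because $\int(\Delta h-\mu h)^2\ge\int(\Delta h)^2$: expanding the square gives a cross term $2\mu\int|\nabla h|^2\ge0$. Rellich compactness then passes the normalization to the limit.

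Second, I need $\liminf_{\mu\to\infty}\Lambda_\mu(A)>\inf_\mu\Lambda_\mu(A)$, ideally $\Lambda_\mu\to\infty$. Expanding the numerator gives
\[ \int_A(\Delta h)^2+2\mu|\nabla h|^2+\mu^2h^2\ \ge\ \mu\Bigl(\int_A|\nabla h|^2+\mu h^2\Bigr), \]
so $\Lambda_\mu(A)\ge\mu\to\infty$. With both facts, the minimum is attained on a compact interval, which completes the proof.
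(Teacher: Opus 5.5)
Your proposal is correct and follows the same route as the paper. For the lower bound you use the partial Fourier transform in the unbounded variables; for the upper bound you use cutoff functions modulated by $e^{i\omega\cdot y}$ with $|\omega|^2=\mu$; and you get attainment of the minimum from continuity of $\mu\mapsto\Lambda_\mu(A)$ together with $\Lambda_\mu(A)\ge\mu$.

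The differences are only technical. The paper pairs the sine and cosine test functions so that $\sin^2+\cos^2=1$ cancels the oscillation exactly, which avoids your Riemann--Lebesgue averaging step. It proves continuity through an explicit local Lipschitz estimate rather than your semicontinuity-plus-compactness argument. Your splitting of $\hat v$ into real and imaginary parts handles the complex-valuedness that the paper glosses over.
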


Furthermore, when the section $A$ is a ball, we can compute this minimum explicitly. We show that the buckling eigenvalue of the cylinder reduces to the eigenvalue of its section if and only if the dimension of the section is strictly greater than one.

\begin{teor}\label{main2}
  Let $0<d<n$ and let $B_1^d\subset\mathbb{R}^d$ be the unit ball in $\mathbb{R}^d$. Then
  \[ \Lambda(B_1^d \times \mathbb{R}^{n-d}) = \Lambda (B_1^d) \]
  if and only if $d>1$. Moreover, if $d=1$, then
  \[ \Lambda((-1,1) \times \mathbb{R}^{n-1}) = \Lambda_\mu(-1,1) \]
  where $\mu$ is the unique positive solution to
    \begin{equation}
   \label{eq:optimalMu}
  \sqrt\mu\,\tanh(\sqrt\mu)=1.
    \end{equation}
\end{teor}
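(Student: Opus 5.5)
By Theorem~\ref{main}, everything reduces to studying the one-parameter family $\mu\mapsto\Lambda_\mu(B_1^d)$ on $[0,\infty)$. The claim is that its minimum sits at $\mu=0$ exactly when $d>1$, and at the root of \eqref{eq:optimalMu} when $d=1$. Note that $\Lambda_0(A)=\Lambda(A)$. My plan is to combine a first-variation formula in $\mu$ with an explicit separation-of-variables description of the eigenfunctions on the ball.

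\emph{First variation in $\mu$.} For fixed $h\in H^2_0(A)$, write
\[
\int_A(\Delta h-\mu h)^2=N_0+2\mu D_0+\mu^2 M,\qquad \int_A|\nabla h|^2+\mu h^2=D_0+\mu M,
\]
with $N_0=\int(\Delta h)^2$, $D_0=\int|\nabla h|^2$ and $M=\int h^2$. Let $u$ be a minimizer for $\Lambda_\mu$. Testing the equation against $u$ gives $-\int(\Delta u-\mu u)u=\int|\nabla u|^2+\mu u^2$. A Hellmann--Feynman type argument (one-sided difference quotients, using $u$ as a competitor at $\mu\pm\varepsilon$) then yields
\[
\frac{d}{d\mu}\Lambda_\mu=2-\Lambda_\mu\,\frac{\int_A u^2}{\int_A|\nabla u|^2+\mu u^2}
\]
wherever $\Lambda_\mu$ is simple. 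At critical points this gives the relation $\Lambda_\mu\int u^2=2\int(|\nabla u|^2+\mu u^2)$. As a first check at $\mu=0$ with $d=1$, take $u=1+\cos(\pi x)$, for which $\Lambda=\pi^2$. Then $\int u^2=3$ and $\int u'^2=\pi^2$, so the derivative equals $2-3=-1<0$. Hence the minimum over $\mu$ is strictly below $\Lambda(-1,1)$ for $d=1$, which already gives the ``only if'' direction of the first claim.

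\emph{Explicit eigenfunctions.} The operator factors as $(\Delta-\mu)(\Delta-\mu+\Lambda)u=0$, so on the ball I would write $u=a\,f(r)Y+b\,g(r)Y$ in each spherical-harmonic sector $Y$. Here $f$ is a Bessel $J$-type solution of $\Delta f+k^2f=0$ with $k^2=\Lambda-\mu$, and $g$ is a modified Bessel $I$-type solution of $\Delta g=\mu g$. The clamped conditions turn into a $2\times2$ determinant, i.e.\ a dispersion relation $F_\ell(k,\sqrt\mu)=0$ for each degree $\ell$.

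\emph{Case $d=1$.} Only the even mode matters, $u=a\cos(kx)+b\cosh(sx)$ with $s=\sqrt\mu$. The dispersion relation reads
\[
k\tan k=-s\tanh s,\qquad \Lambda=k^2+s^2.
\]
Differentiating implicitly, the condition $d\Lambda/d\mu=0$ becomes $2k\,dk/d\mu=-1$. Eliminating $k$ with the dispersion relation, I expect this to collapse to $s\tanh s=1$. Since $s\tanh s$ is strictly increasing from $0$ to $\infty$, the root is unique. Combined with the negative slope at $0$ and $\Lambda_\mu\to\infty$ as $\mu\to\infty$ (the numerator grows like $\mu^2$, the denominator like $\mu$), this critical point is the global minimizer.

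\emph{Case $d\ge2$ (main obstacle).} Here I must show $\Lambda_\mu(B_1^d)\ge\Lambda_0(B_1^d)=j_{d/2,1}^2$ for all $\mu\ge0$. The crude route via $N_0\ge\Lambda D_0$ and $2D_0\ge\Lambda M$ fails: already for $d=2$ one has $j_{1,1}^2>2j_{0,1}^2$. So I would argue on the dispersion relations instead, and in two steps.
\begin{itemize}
\item First, I would show that the lowest branch is the radial one ($\ell=0$) for every $\mu$, by comparing the Bessel-ratio functions across $\ell$.
\item Second, I would prove monotonicity of the radial branch $\mu\mapsto k^2+\mu$. In the radial sector the dispersion relation reads
\[
k\frac{J_{\nu+1}(k)}{J_\nu(k)}=-s\frac{I_{\nu+1}(s)}{I_\nu(s)},\qquad \nu=\frac d2-1.
\]
Using the known monotonicity and convexity properties of the ratios $sI_{\nu+1}(s)/I_\nu(s)$ and $kJ_{\nu+1}(k)/J_\nu(k)$, I would show $1+2k\,dk/d\mu>0$ whenever $\nu\ge0$.
\end{itemize}
This sign verification, which must fail precisely at $\nu=-1/2$, is the step I expect to be hardest. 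The first thing I would try is to check the sign at $\mu=0$ via the first-variation formula above, where it reduces to $2\int|\nabla u|^2>j_{d/2,1}^2\int u^2$ for the explicit radial eigenfunction $u=r^{-\nu}J_\nu(j_{d/2,1}r)-J_\nu(j_{d/2,1})$. I would then aim to propagate positivity by showing that no interior critical point can satisfy the critical-point relation for $\nu\ge0$.
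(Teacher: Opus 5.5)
Your overall architecture matches the paper's. You reduce to $\min_{\mu\ge0}\Lambda_\mu(B_1^d)$ via Theorem~\ref{main}, write the dispersion relations on the ball, identify $\Lambda_\mu$ with the radial (even) branch, and locate critical points of $\mu\mapsto\Lambda_\mu$ by implicit differentiation. Your $d=1$ analysis is sound. Eliminating $\tan k$ through $k\tan k=-s\tanh s$, the condition $2k\,dk/d\mu=-1$ does reduce to $\tanh s\,(1-s\tanh s)(k^2+s^2)=0$, i.e.\ $s\tanh s=1$, as you expected. Your test at $\mu=0$ with $u_0=1+\cos(\pi x)$ is also good: its Rayleigh quotient is $(\pi^4+2\pi^2\mu+3\mu^2)/(\pi^2+3\mu)=\pi^2-\mu+O(\mu^2)$. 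This proves the ``only if'' direction using only an upper bound, which is slightly more direct than the paper, where it is read off from the sign of $\Lambda_\mu'$ below the root.

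The genuine gap is the case $d\ge2$, which is the entire ``if'' direction. You say you would prove $1+2k\,dk/d\mu>0$, or the absence of interior critical points, via unspecified monotonicity and convexity properties of Bessel ratios. That is a plan, not an argument. Checking the sign at $\mu=0$ is neither needed nor sufficient, and ``propagating positivity'' requires exactly the no-critical-point statement you have not proved.

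The missing step is an exact factorization. Set
\[
G(\Lambda,\mu)=kI_\nu(s)J_{\nu+1}(k)+sJ_\nu(k)I_{\nu+1}(s).
\]
Differentiate in $\mu$ at fixed $\Lambda$ and use the recurrences for $J_\nu',J_{\nu+1}',I_\nu',I_{\nu+1}'$. The $I_\nu J_\nu$ terms cancel and one gets
\[
\partial_\mu G=\frac{s^2+k^2}{2s^2k^2}\Bigl[\nu kI_\nu(s)J_{\nu+1}(k)-\nu sJ_\nu(k)I_{\nu+1}(s)+skI_{\nu+1}(s)J_{\nu+1}(k)\Bigr].
\]
On $G=0$ this becomes
\[
\partial_\mu G=\frac{s^2+k^2}{2sk}\,J_{\nu+1}(k)\Bigl[I_{\nu+1}(s)+\frac{2\nu}{s}I_\nu(s)\Bigr].
\]
On the first branch $k\in(j_{\nu,1},j_{\nu+1,1})$, so $J_{\nu+1}(k)>0$, and for $\nu\ge0$ the bracket is positive. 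Both $tJ_{\nu+1}(t)$ and $J_\nu(t)$ are strictly decreasing on that interval, so $\partial_\Lambda G<0$. Hence $\Lambda_\mu'=-\partial_\mu G/\partial_\Lambda G>0$ for every $\mu>0$, which is exactly your second bullet, and the minimum is at $\mu=0$.

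At $\nu=-1/2$ the bracket is a positive multiple of $\sinh s-\cosh s/s$, which vanishes precisely when $s\tanh s=1$. So this is the ``failure at $\nu=-1/2$'' you anticipated, and the same identity covers $d=1$.

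Your first bullet (the radial branch is lowest) should also be made precise, via interlacing: for $\ell\ge1$,
\[
\Lambda_{\mu,\ell,1}>j_{\nu+\ell,1}^2+\mu\ge j_{\nu+1,1}^2+\mu>\Lambda_{\mu,0,1}.
\]
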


The investigation of the buckling eigenvalue on unbounded cylinders is fundamental in the study of the classical Dirichlet-buckling inequality. In \cite{Payne1955} (see also \cite{Payne}), Payne proved that for every bounded convex set $\Omega\subset\mathbb{R}^n$, the Dirichlet and buckling eigenvalues are related by the inequality
\begin{equation}\label{Payneineq}
    \Lambda(\Omega)\le 4\lambda(\Omega).
\end{equation}
Looking for optimal sequences achieving equality in~\eqref{Payneineq} it is natural to compute the eigenvalues on a sequence of boxes $\Omega_k=(0,1)\times(-k,k)^{n-1}$ converging to the infinite strip $(0,1)\times \mathbb{R}^{n-1}$. It was thought that the infinite strip would have asymptotically saturated the inequality, implying that the universal constant $4$ could not be improved (see, for instance, \cite{Payne1955} or \cite[Remark 8.13 (iv)]{AshGesztMitrShtTreschl2013}). This belief relied heavily on the assumption that $\Lambda((-1,1)\times \mathbb{R}^{n-1}) = \Lambda(-1,1) = 4\lambda(-1,1)$.

However, in a recent paper \cite{Acampora_Cristoforoni_Nitsch_Trombetti_2026}, we disproved this assumption by showing that $\Lambda((-1,1)\times \mathbb{R}^{n-1}) < \Lambda(-1,1)$, thus opening the door to improvements for the factor $4$ in Payne's inequality. In the present paper, we explicitly compute $\La((-1,1)\times\R^{n-1})$ extending the strategy to the buckling eigenvalue of generic unbounded cylinders.

\section{Proof of the variational characterization}

Before proving the main theorem, we state some properties of the modified bi-harmonic operator $(\De-\mu)^2$ on the space $H^2_0(A)$, where $A$ is a bounded open set in $\R^d$. 

\begin{oss}
We start by observing that, as $(\De-\mu)^2$, on $H^2_0(A)$, is simply a lower-order perturbation of the classical bi-harmonic operator with Dirichlet boundary condition, for every bounded and linear operator $L$ on $H^2_0(A)$, the boundary value problem
\begin{equation}
\label{eqL}
\begin{cases}
    (\De-\mu)^2 u = L &\text{in }A,\\[5pt]
    u=0 &\text{on }\partial A,\\[5pt]
    \partial_\nu u=0 &\text{on }\pa A,
\end{cases}
\end{equation}
admits a unique weak solution $u_L\in H^2_0(A)$, that is the equation
\[\int_A (\De u - \mu u)(\De \varphi - \mu \varphi)\,dx=L(\varphi),\]
for every $\varphi\in H^2_0(A)$, has a unique solution in $H^2_0(A)$. Indeed, for every $\mu\ge0$ the bilinear form
\[\begin{split}B_\mu(u,\vp)&=\int_A (\De u - \mu u)(\De \varphi - \mu \varphi)\,dx\\[7pt]&=\int_A \De u\De \varphi\,dx+2\mu\int_A \nabla u \nabla \varphi\,dx+\mu^2\int_A u\varphi\,dx
\end{split}\]
on $H^2_0(A)$, is equivalent to the standard scalar product of $H^2(A)$
\[\langle u,\vp \rangle _{H^2(A)}=\int_A D^2 u\colon D^2 \varphi\,dx+\int_A \nabla u \nabla \varphi\,dx+\int_A u\varphi\,dx,\]
which, by the Poincaré inequality and integration by parts, is equivalent to 
\[B_0(u,\vp)= \int_A \De u\De \varphi\,dx.\]
Hence, we deduce the existence and uniqueness of a solution by the Lax-Milgram theorem.
\end{oss}

\begin{prop} The boundary value problem
\begin{equation}\label{eq}\begin{cases}
    (\De-\mu)^2 u =-\Lambda (\De-\mu)u &\text{in }A,\\[5pt]
    u=0 &\text{on }\partial A,\\[5pt]
    \partial_\nu u=0 &\text{on }\pa A,
\end{cases}\end{equation}
admits a discrete spectrum 
\[0<\La_{\mu,1}\le\La_{\mu,2}\le\dots\le\La_{\mu,k}\le\dots\to+\infty\]
\end{prop}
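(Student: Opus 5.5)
The plan is to recast \eqref{eq} as the eigenvalue problem for a compact, self-adjoint, positive operator on a Hilbert space and then apply the spectral theorem. I would work on $H^2_0(A)$ equipped with the scalar product $B_\mu(u,\vp)$. By the Remark this is equivalent to the standard $H^2$ scalar product, so $(H^2_0(A),B_\mu)$ is a Hilbert space. On the right-hand side I would use the bilinear form
\[
C_\mu(u,\vp)=\int_A \nabla u\nabla\vp\,dx+\mu\int_A u\vp\,dx .
\]
After integration by parts, the weak form of \eqref{eq} reads $B_\mu(u,\vp)=\La\, C_\mu(u,\vp)$ for every $\vp\in H^2_0(A)$. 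This form of the problem also matches the variational characterization of $\La_\mu(A)$ stated in the Introduction.

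The operator is built as follows. For fixed $f\in H^2_0(A)$, the map $L_f(\vp)=C_\mu(f,\vp)$ is a bounded linear functional on $H^2_0(A)$. Its norm is bounded by $(1+\mu)\|f\|_{H^1}\|\vp\|_{H^1}$. By the Remark there is a unique $Tf:=u_{L_f}\in H^2_0(A)$ with $B_\mu(Tf,\vp)=C_\mu(f,\vp)$ for all $\vp$. This $T$ has the following properties.
\begin{itemize}
\item $T$ is linear and bounded.
\item $T$ is self-adjoint with respect to $B_\mu$, because $B_\mu(Tf,g)=C_\mu(f,g)=C_\mu(g,f)=B_\mu(Tg,f)$.
\item $T$ is positive, because $B_\mu(Tf,f)=C_\mu(f,f)\ge\|\nabla f\|_{L^2}^2$.
\item $T$ is injective, because $Tf=0$ forces $\nabla f=0$, and then $f=0$ by the Poincaré inequality, since $f\in H^1_0(A)$.
\end{itemize}
The eigenvalues of \eqref{eq} are then exactly the reciprocals $\La=1/\tau$ of the nonzero eigenvalues $\tau$ of $T$.

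The key step, and the only real obstacle, is compactness of $T$. I would get it from the bound $\|Tf\|_{B_\mu}\le C\|f\|_{H^1}$, obtained by testing with $\vp=Tf$. Together with the Rellich compact embedding $H^2_0(A)\hookrightarrow H^1_0(A)$, this makes $T$ compact. This holds for any bounded open $A$ with no regularity assumption, because functions in $H^2_0$ extend by zero to $H^2(\R^d)$.

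The spectral theorem for compact self-adjoint operators on the infinite-dimensional space $H^2_0(A)$ then gives a sequence of eigenvalues $\tau_k>0$, each of finite multiplicity, with $\tau_k\to0$, and $\ker T=\{0\}$. Setting $\La_{\mu,k}=1/\tau_k$, ordered increasingly and counted with multiplicity, gives
\[
0<\La_{\mu,1}\le\La_{\mu,2}\le\dots\le\La_{\mu,k}\le\dots\to+\infty .
\]
Positivity of $\La_{\mu,1}$ also follows directly from $\La=B_\mu(u,u)/C_\mu(u,u)>0$. The min–max principle applied to $T$ gives $\La_{\mu,1}$ as the minimum of the Rayleigh quotient, which is the characterization of $\La_\mu(A)$ stated in the Introduction.
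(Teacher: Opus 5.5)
Your proof is correct and is essentially the paper's argument. Both use a Lax--Milgram solution operator, get compactness from $H^2_0(A)\hookrightarrow H^1_0(A)$, check self-adjointness and positivity, and then apply the spectral theorem for compact self-adjoint operators. The only difference is that you work on $(H^2_0(A),B_\mu)$, while the paper places $R_\mu$ on $(H^1_0(A),b_\mu)$; your $T$ is the restriction of $R_\mu$. This lets you get injectivity directly from Poincaré rather than from the density of $H^2_0(A)$ in $H^1_0(A)$. You also make explicit the infinite-dimensionality that the paper uses only implicitly to get $\Lambda_{\mu,k}\to+\infty$.
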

\begin{proof}
Endow the space $H^1_0(A)$ with the scalar product
\[b_\mu(u,\vp)=\int_A \nabla u \nabla \varphi\,dx+\mu\int_A u\varphi\,dx.\]
For every $f\in H^1_0(A)$ denote, by $u_f$ the unique solution to equation \eqref{eqL} with $L=L_f$ defined as
\[L_f\colon \vp\in H^2_0(A)\mapsto b_\mu(f,\vp),\]
that is
\begin{equation}\label{B=b}B_\mu(u_f,\vp)=L_f(\vp)=b_\mu(f,\vp),\end{equation}
for every $\vp\in H^2_0(A)$.  In particular
\[B_\mu(u_f,u_f)=b_\mu(f,u_f)\le \max\set{1,\mu}\norma{f}_{H^1_0}\norma{u_f}_{H^1_0},\]
so that
\[\norma{u_f}_{H^2_0}\le C_\mu \norma{f}_{H^1_0},\]
for some constant $C_\mu>0$. Then, by the compactness of the embedding of $H^2_0(A)$ in $H^1_0(A)$, we have that the resolvent map 
\[R_\mu\colon f\in H^1_0(A)\mapsto u_f\in H^2_0(A)\subset H^1_0(A),\]
is a linear and compact operator. Moreover, it is also self-adjoint with respect to the scalar product $b_\mu$, indeed, by definition, we have
\[b_\mu(R_\mu f, g) = B_\mu(R_\mu f, R_\mu g) = b_\mu(f, R_\mu g).\] Hence, by classical spectral theory (see for instance \cite[Theorem 6.11]{B11}), it admits a Hilbert base of eigenfunctions $\Set{f_k}$ with associated eigenvalues $\set{\sigma_k}_k$, that is
\[R_\mu (f_k) = \sigma_k f_k,\]
where the non-zero eigenvalues have finite multiplicity and are either finite in number or can be ordered in a sequence converging to $0$.
Moreover, for every function $f\in H^1_0(A)$
\begin{equation}\label{posdef}b_\mu(R_\mu f, f)=B_\mu(R_\mu f, R_\mu f)\ge0\end{equation}
 and is equal to zero if and only if
$R_{\mu}f=0$. By \eqref{B=b}, if $R_\mu f=0$ we have that $L_f=0$, that is
\[b_\mu(f,\vp)=0\]
for every $\vp\in H^2_0(A)$, hence, by the density of $H^2_0(A)$ in $H^1_0(A)$, $f=0$. In particular, then, $0$ is not an eigenvalue of $R_\mu$.\medskip

Let $\La_{\mu,k}=\si_k^{-1}$ and $u_k=R_\mu(f_k)\in H^2_0(A)$, we have that
\[B_\mu(u_k,\vp)=b_\mu(f_k,\vp)=\La_{\mu,k} b_\mu(u_k,\vp)\]
for every $\vp\in H^2_0(A)$, that is $\set{\La_{\mu,k}}$ and $\set{u_k}$ are the eigenvalues and eigenfunctions of the boundary value problem \eqref{eq}. Finally, by \eqref{posdef}, we have that 
all the eigenvalues $\sigma_k$ are strictly positive, hence the sequence $\set{\La_{\mu,k}}$ can be ordered in an increasing sequence diverging to infinity.
\end{proof}

\begin{oss}
    \label{oss:La>mu}
By the previous remark, denoting by $\La_\mu(A)$ the first eigenvalue, we have the aforementioned variational characterization
\begin{equation}
    \label{eq:varchar}
    \begin{split}
    \La_\mu(A)=&
    \min_{h\in H^2_0(A)\setminus\set{0}} \dfrac{B_\mu(h,h)}{b_\mu(h,h)}
    =
    \min_{h\in H^2_0(A)\setminus\set{0}} \dfrac{\displaystyle\int_A(\De h -\mu h)^2\,dx}{\displaystyle\int_A\abs{\nabla h}^2+\mu h^2\,dx}\\[10pt]
    =&\mu+
    \min_{h\in H^2_0(A)\setminus\set{0}} \dfrac{\displaystyle\int_A(\De h)^2 +\mu \abs{\nabla h}^2\,dx}{\displaystyle\int_A\abs{\nabla h}^2+\mu h^2\,dx},
\end{split}\end{equation}
where we have integrated by parts in the last equality. In particular, we observe that
\[\La_\mu(A)>\mu.\]
\end{oss}

\begin{lemma}\label{lemma:lipschitz-minimum}
The function
\[
    \mu\in[0,+\infty)\mapsto\La_\mu(A)
\]
is locally Lipschitz continuous. Moreover, there exists $\bar\mu\ge0$ such that
\[
    \La_{\bar\mu}(A)=\min_{\mu\ge0}\La_\mu(A).
\]
\end{lemma}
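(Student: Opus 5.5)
The plan is to derive local Lipschitz continuity by comparing Rayleigh quotients at two values of $\mu$ for a fixed test function. Existence of the minimum will then come from continuity together with coercivity, since $\La_\mu(A)>\mu$ by Remark~\ref{oss:La>mu}.

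\emph{Lipschitz estimate.} For $h\in H^2_0(A)\setminus\set{0}$ I write the quotient in the last form of \eqref{eq:varchar}:
\[
Q_\mu(h)=\mu+\frac{N_\mu(h)}{D_\mu(h)},\qquad N_\mu(h)=\int_A(\De h)^2+\mu\abs{\nabla h}^2\,dx,\quad D_\mu(h)=\int_A\abs{\nabla h}^2+\mu h^2\,dx.
\]
Fix $0\le\mu,\nu\le M$ and let $h$ be a minimizer for $\La_\nu(A)$, so that $Q_\nu(h)=\La_\nu(A)$. By minimality,
\[
\La_\mu(A)-\La_\nu(A)\le Q_\mu(h)-Q_\nu(h).
\]
I then bound the right-hand side.

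\begin{itemize}
\item $N$ and $D$ are affine in $\mu$, with slopes $\int_A\abs{\nabla h}^2$ and $\int_A h^2$.
\item These slopes satisfy $\int_A\abs{\nabla h}^2\le D_\mu(h)$ and, by Poincaré, $\int_A h^2\le \lambda(A)^{-1}\int_A\abs{\nabla h}^2\le \lambda(A)^{-1}D_\mu(h)$.
\item Also $D_\mu(h)\ge D_0(h)$.
\end{itemize}

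Hence
\[
\Bigl|\frac{N_\mu}{D_\mu}-\frac{N_\nu}{D_\nu}\Bigr|\le \frac{|N_\mu-N_\nu|}{D_\mu}+\frac{N_\nu}{D_\nu}\,\frac{|D_\mu-D_\nu|}{D_\mu}\le |\mu-\nu|\bigl(1+\La_\nu(A)\lambda(A)^{-1}\bigr).
\]
It remains to bound $\La_\nu(A)$ uniformly for $\nu\le M$. Testing \eqref{eq:varchar} with a fixed $h_0\in C^\infty_c(A)$ gives $\La_\nu(A)\le Q_\nu(h_0)\le C(M)$. So $\La_\mu-\La_\nu\le C'(M)|\mu-\nu|$, and exchanging the roles of $\mu$ and $\nu$ gives the local Lipschitz bound.

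\emph{Existence of the minimum.} Let $m=\inf_{\mu\ge0}\La_\mu(A)$, which is at most $\La_0(A)<\infty$. Remark~\ref{oss:La>mu} gives $\La_\mu(A)>\mu>\La_0(A)$ whenever $\mu>\La_0(A)$. The infimum therefore equals the infimum over the compact interval $[0,\La_0(A)]$. A continuous function attains its minimum there at some $\bar\mu$, which is the required minimizer.

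The only delicate step is making the Lipschitz constant uniform on compact sets. This requires the a priori bound on $\La_\nu(A)$ together with the Poincaré control of $\int h^2$ by $D_\mu(h)$. Both are straightforward, so I expect no real obstacle.
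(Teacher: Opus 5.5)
Your proposal is correct and follows essentially the paper's proof. You test $\La_\mu$ with a minimizer for $\La_\nu$, control the change of the quotient via Poincaré ($\int_A h^2\le\la(A)^{-1}\int_A\abs{\nabla h}^2$) together with a local bound on $\La_\nu$ obtained from a fixed test function, symmetrize, and obtain the minimum from continuity plus $\La_\mu(A)>\mu$. Your explicit reduction to $[0,\La_0(A)]$ is exactly what the paper leaves implicit. The only difference is cosmetic: the paper writes $Q(\mu_1,h)-Q(\mu_2,h)$ as the exact product $(\mu_1-\mu_2)\bigl(1+\frac{G^2-LI}{(G+\mu_2 I)(G+\mu_1 I)}\bigr)$ instead of splitting the fraction, and both routes give the constant $2+C(M)/\la(A)$ once the extra $\abs{\mu-\nu}$ from the leading term $\mu$ in $Q_\mu$ is included (your displayed bound omits it, but it is absorbed into $C'(M)$).
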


\begin{proof}
For every $h\in H^2_0(A)\setminus\set{0}$, set
\begin{equation}
\label{eq:LGI}
    L(h):=\int_A(\De h)^2\,dx,
    \qquad
    G(h):=\int_A\abs{\nabla h}^2\,dx,
    \qquad
    I(h):=\int_A h^2\,dx.
\end{equation}
By~\eqref{eq:varchar}, the  Rayleigh quotient associated to $\Lambda_\mu(A)$ can be written as
\[
    Q(\mu,h)=\mu+\frac{L(h)+\mu G(h)}{G(h)+\mu I(h)}.
\]
For every $\mu\ge0$, let $h_\mu\in H^2_0(A)$ be an eigenfunction of eigenvalue $\La_\mu(A)$ with $G(h_\mu)+\mu I(h_\mu)=1$, so that
\[
    I(h_\mu)\le \dfrac{G(h_\mu)}{\la(A)},
    \quad G(h_\mu)\le1,\quad\text{and}\quad
L(h_\mu)\le \La_\mu(A),
\]
where we recall that $\la(A)$ denotes the first Dirichlet eigenvalue of the Laplacian.

Let us first notice that $\mu\mapsto\La_\mu(A)$ is locally bounded. For every $M>0$ and $\mu\in[0,M]$, using $h_0$ as a test function for $\La_\mu(A)$, we have 
\begin{equation}
\label{eq:localBoundedness}
\begin{split}\La_\mu(A)&\le Q(\mu,h_0)= \mu+\dfrac{\La_0(A)+\mu}{1+\mu I(h_0)}\\[5pt]&\le 2\mu+\La_0(A)\le 2M+\La_0(A).
\end{split}
\end{equation}

We now show the local Lipschitz continuity. For every $\mu_1,\mu_2\ge0$ and every $h\in H^2_0(A)\setminus\set{0}$, we have that
\[\begin{split}
Q(\mu_1,h)-Q(\mu_2,h) &= \mu_1-\mu_2+ \dfrac{(L(h)+\mu_1 G(h))(G(h)+\mu_2 I(h))-(L(h)+\mu_2 G(h))(G(h)+\mu_1 I(h))}{(G(h)+\mu_2 I(h))(G(h)+\mu_1 I(h))}\\[10pt]
&=(\mu_1-\mu_2)\left(1+ \dfrac{G(h)^2-L(h)I(h)}{(G(h)+\mu_2 I(h))(G(h)+\mu_1 I(h))}\right).\end{split}\]
In particular, for $h=h_{\mu_1}$ and $\mu_1,\mu_2\in[0,M]$, using~\eqref{eq:LGI} and~\eqref{eq:localBoundedness} we have
\[
\begin{split}
   \abs{Q(\mu_1,h_{\mu_1})-Q(\mu_2,h_{\mu_1})} &\le\left(1+\dfrac{G(h_{\mu_1})^2+L(h_{\mu_1})I(h_{\mu_1})}{G(h_{\mu_1})+\mu_2 I(h_{\mu_1})}\right)\abs{\mu_1-\mu_2}\\[10pt]
   &\le \left(2+\dfrac{\La_\mu(A)}{\la(A)}\right)\abs{\mu_1-\mu_2}\\[10pt]
   &\le \left(2+\dfrac{2M+\La_0(A)}{\la(A)}\right)\abs{\mu_1-\mu_2}=:L_M \abs{\mu_1-\mu_2}.
\end{split}
\]
Then
\[\La_{\mu_2}(A)\le Q(\mu_2,h_{\mu_1})\le \La_{\mu_1}(A)+\abs{Q(\mu_1,h_{\mu_1})-Q(\mu_2,h_{\mu_1})}\le \La_{\mu_1}(A)+L_M \abs{\mu_1-\mu_2}.\]
Similarly, interchanging the roles of $\mu_1$ and $\mu_2$, we obtain
\[\La_{\mu_1}(A)\le\La_{\mu_2}(A)+L_M\abs{\mu_1-\mu_2},\]
hence
\[\abs{\La_{\mu_1}(A)-\La_{\mu_2}(A)}\le L_M\abs{\mu_1-\mu_2}.\]

Finally, by \autoref{oss:La>mu}, we have $\La_\mu(A)>\mu$ for every $\mu\ge0$, which, together with the continuity of $\La_\mu(A)$, proves the existence of a minimum point $\bar \mu$.
\end{proof}

\begin{oss}
Notice that $\La_\mu(A)$ enjoys the following scaling property 
\[\La_{t^2\mu}(tA)=t^{-2}\La_\mu(A).\]
\end{oss}
\medskip

We now prove the main theorem.
\begin{proof}[Proof of \autoref{main}]
    We start by proving that $\Lambda(A\times\R^{n-d})\le \Lambda_\mu(A)$ for every $\mu\ge0$. Let $k=n-d$, fix $\chi\in C^\infty_c(\R^k)$ with 
    \[\int_{\R^{k}}\chi^2\,dx=1.\]
    For every $\mu\ge0$, every $\xi\in \R^{k}$, with $\abs{\xi}^2=\mu$, and $R>0$ let us define the functions $u_{S}, u_{C}\in C^\infty_c(\R^k)$
    \[
    u_{S}(x)=R^{-k/2}\chi\left(\dfrac{x}{R}\right)\sin(\xi\cdot x),\qquad u_{C}(x)=R^{-k/2}\chi\left(\dfrac{x}{R}\right)\cos(\xi\cdot x).
    \]
    Let $h\in C^\infty_c(A)\setminus\set{0}$  and consider
    \[
    v_{S}(x,y)=u_{S}(x)h(y),\qquad v_{C}(x,y)=u_{C}(x)h(y).
    \]
     Using $v_{S}$ and $v_{C}$ as test functions for $\Lambda(A\times\R^k)$,  we have  
    \[
    \begin{split}
        \Lambda(A\times\R^k)&\le \min\Set{\dfrac{\displaystyle\int_{A\times\R^k} (\Delta v_{S})^2\,dx\,dy}{\displaystyle\int_{A\times\R^k} \abs{\nabla v_{S}}^2\,dx\,dy },\dfrac{\displaystyle\int_{A\times\R^k} (\Delta v_{C})^2\,dx\,dy}{\displaystyle\int_{A\times\R^k} \abs{\nabla v_{C}}^2\,dx\,dy }} \\[7 pt]
        &\le \dfrac{\displaystyle\int_{A\times\R^k} \left(\Delta v_{S}\right)^2+\left(\Delta v_{C}\right)^2\,dx\,dy}{\displaystyle\int_{A\times\R^k}  \abs{\nabla v_{S}}^2+\abs{\nabla v_{C}}^2\,dx\,dy }.
    \end{split}
        \]
    By direct computations, using that 
    \[
        u_S^2+u_C^2 = R^{-k}\chi^2(x/R), \qquad   |\na u_S|^2+|\na u_C|^2 =\mu R^{-k}\chi^2(x/R)+ O(R^{-k-2}),
    \]
    we have 
    \begin{equation}\label{eqgrad}
    \begin{split}
        \abs{\nabla v_{S}}^2+\abs{\nabla v_{C}}^2 = R^{-k} \chi^2({x/R}) \left(\abs{\nabla h}^2+\mu h^2\right)+ O (R^{-k-2}).
    \end{split}
    \end{equation}
    For the Laplacian, we observe that 
    \[
        \De u_S = -\mu u_S + O(R^{-k/2-1}), \qquad \De u_C = -\mu u_C + O(R^{-k/2-1}),
    \]
    so that 
    \begin{equation}\label{eqlap}\begin{split}\left(\Delta v_{S}\right)^2+\left(\Delta v_{C}\right)^2=&R^{-k}\chi^2|_{x/R} \left(\Delta h - \mu h\right)^2+O(R^{-k-2}).
    \end{split}\end{equation}
    Integrating \eqref{eqgrad} and \eqref{eqlap}, and using the fact that $R^{-k/2}\chi$ is normalized in $L^2(\R^k)$ and that $\chi(x/R)$ is supported in $R\supp(\chi)$, we have that 
        \[
        \begin{gathered}\int_{A\times\R^k}\left(\left(\Delta v_{S}\right)^2+\left(\Delta v_{C}\right)^2\right)\,dx\,dy=\int_A \left(\Delta h - \mu h\right)^2\,dy+O(R^{-2}),\\[15pt]
        \int_{A\times\R^k}\left(\abs{\nabla v_{S}}^2+\abs{\nabla v_{C}}^2\right)\,dx\,dy=\int_A \left(\abs{\nabla h}^2+\mu h^2\right)\,dy+O(R^{-2}),
        \end{gathered}
        \]
        so that 
        \[\Lambda(A\times\R^k)\le \dfrac{\displaystyle \int_A \left(\Delta h - \mu h\right)^2\,dy}{\displaystyle \int_A \abs{\nabla h}^2+\mu h^2\,dy}+o(1),\]
        and taking the limit as $R$ goes to infinity and the infimum over $h\in C^\infty_c(A)\setminus\set{0}$ we have $\Lambda(A\times\R^{n-d})\le \Lambda_\mu(A)$ for every $\mu\ge0$.\medskip

        To prove the reverse inequality, fix $v \in C_c^\infty(A\times\R^k)\setminus\{0\}$, and let $\widehat v(\xi,y)$ be the Fourier transform of $v$ with respect to the component $x\in\R^k$. By Plancherel,
\[
\int_{A\times\R^k} (\Delta v)^2\,dx\,dy
=
\int_{\R^k} \int_A \left(\Delta_y \widehat{v}(\xi,y)-\abs{\xi}^2\widehat{v}(\xi,y)\right)^2\,dy\,d\xi
\]
and
\[
\int_{A\times\R^k} |\nabla v|^2\,dx\,dy
=
\int_{\R^k} \int_A \left(\abs{\nabla_y \widehat{v}(\xi,y)}^2+\abs{\xi}^2\abs{\widehat{v}(\xi,y)}^2\right) \,dy\,d\xi.
\]
By definition of $\Lambda_\mu(A)$, for every $\xi\in \R^k$ we have 
\[\begin{split}
 \int_A \left(\Delta_y \widehat{v}(\xi,y)-\abs{\xi}^2\widehat{v}(\xi,y)\right)^2\,dy&\ge \Lambda_{\abs{\xi}^2}(A) \int_A \left(\abs{\nabla_y \widehat{v}(\xi,y)}^2+\abs{\xi}^2\abs{\widehat{v}(\xi,y)}^2\right)\,dy\\[15pt]
 &\ge \inf_{\mu\ge0} \Lambda_{\mu}(A) \int_A \left(\abs{\nabla_y \widehat{v}(\xi,y)}^2+\abs{\xi}^2\abs{\widehat{v}(\xi,y)}^2\right)\,dy.
\end{split}\]
Integrating in $\xi$, we obtain
\[
\int_{A\times\R^k} (\Delta v)^2\,dx\,dy
\geq
\inf_{\mu\ge0}\Lambda_\mu(A)
\int_{A\times \R^k} \abs{\nabla v}^2\,dx\,dy.
\]
Therefore, for every $v \in C_c^\infty(A\times\R^k)\setminus\set{0}$
\[
\frac{\displaystyle \int_{A\times\R^k} (\Delta v)^2\,dx\,dy}{\displaystyle \int_{A\times\R^k} \abs{\nabla v}^2\,dx\,dy} \geq \inf_{\mu\ge0}\Lambda_\mu(A)
\]
and taking the infimum over $v$ yields
\[
\Lambda(A\times\R^k) \geq \inf_{\mu\ge0}\Lambda_\mu(A).
\]
\end{proof}

\subsection{Explicit computations on balls}

In this section, we compute the eigenvalues and eigenfunctions of the problem
\[\begin{cases}
        (\De-\mu)^2 u=-\Lambda (\De-\mu)u &\text{in }B_R,\\[5pt]
        u=0 &\text{on }\pa B_R,\\[5pt]
        \pa_\nu u=0 &\text{on }\pa B_R,
    \end{cases}\]
of a ball $B_R\subset\R^d$, in terms of the Bessel functions, the modified Bessel functions of the first kind and of the spherical harmonics. In particular, we first discuss the case $d>1$ and $\mu>0$ in \autoref{prop:spectrum}, and $d=1$ and $\mu>0$ in \autoref{prop:spectrum1D}. Finally in \autoref{oss:mu=0} we recall the case of the Buckling problem i.e. the case $\mu=0$. We conclude the section with the study of the minimization of the function
\[\mu\mapsto\La_\mu(B_1),\]
in \autoref{minim}, yielding the proof of \autoref{main2}. Similar computations for the eigenvalues and eigenfunctions of fourth-order PDEs in radially symmetric domains can be found, for instance, in \cite{Coffman_Duffin_1992, FGW05, C11, BP15, DCNT15, DCNT17, BP20,Buoso_Luzzini_Provenzano_Stubbe_2021,BF25,BM25}.\medskip

 For every $\nu\in\R$ let $J_\nu$ and $I_\nu$ be the Bessel function and modified Bessel function of the first kind of order $\nu$ respectively. For every $\nu\ge0$ and every $i\in\N$ we denote by $j_{\nu,i}$ and $j'_{\nu,i}$ the $i$-th positive zero of $J_\nu$ and $J'_{\nu}$ respectively. We recall the interlacing property of the zeros of the Bessel functions of positive order (see \cite[Formulae 9.5.2]{ASHandbook})
\begin{equation}
    \label{eq:interlacing}
    \begin{split}
&j_{\nu,1}<j_{\nu+1,1}<j_{\nu,2}<j_{\nu+1,2}<\dots\\[5pt]
&\nu\le j'_{\nu,1}<j_{\nu,1}<j'_{\nu,2}<j_{\nu,2}<\dots
\end{split}
\end{equation}
To avoid confusion with the modified Bessel of the second kind, for every $k\in\N_0$ let $\Set{S_{k,l}}_{1\le l\le d_k}$ be an $L^2(\partial B_1)$-orthonormal basis for the spherical harmonics of order $k$, that is an orthonormal basis of the eigenfunctions of the Laplace-Beltrami operator on $\partial B_1$ of eigenvalue $\lambda_k=k(d-2+k)$. 
\medskip

We start with the case $d>1$.

\begin{prop}\label{prop:spectrum}
    Let $d>1$, let $B_R\subseteq\R^d$ be a centered ball, and fix $\mu>0$. Then, denoting by $\eta=d/2-1$ and $\eta_k=\eta+k$, we have that the spectrum of the eigenvalue problem 
    \begin{equation}\label{eqballu}\begin{cases}
        (\De-\mu)^2 u=-\Lambda (\De-\mu)u &\text{in }B_R,\\[5pt]
        u=0 &\text{on }\pa B_R,\\[5pt]
        \pa_\nu u=0 &\text{on }\pa B_R,
    \end{cases}\end{equation}
    is given by the solutions to the following family of equations
    \begin{equation}\label{spectrum}\sqrt{\La -\mu}\,I_{\eta_k}(\sqrt\mu\, R) J_{\eta_k+1}\left(\sqrt{\La-\mu}\, R\right)+\sqrt\mu\,I_{\eta_k+1}(\sqrt\mu\, R) J_{\eta_k}\left(\sqrt{\La-\mu}\, R\right)=0,\end{equation}
    each of which has infinitely many solutions. Denoting by $\La_{\mu,k,i}$ the $i$-th zero of the previous equation we have that
    \[\dfrac{j _{\eta_k,i}^2}{R^2}+\mu<\La_{\mu,k,i}<\dfrac{j _{\eta_k+1,i}^2}{R^2}+\mu.\]
    Moreover, for every $1\le l \le d_k$, the function 
    \[u_{k,l,i}(x)=S_{k,l}\left(\dfrac{x}{\abs{x}}\right)\abs{x}^{-\eta}\left[\dfrac{I_{\eta_k}(\sqrt\mu\, \abs{x})}{I_{\eta_k}(\sqrt{\mu}\,R)}-\dfrac{J_{\eta_k}\left(\sqrt{\La_{\mu,k,i}-\mu}\,\abs{x}\right)}{J_{\eta_k}\left(\sqrt{\La_{\mu,k,i}-\mu}\,R\right)}\right],\]
    is an associated eigenfunction.
    In particular, the smallest eigenvalue, $\Lambda_\mu$, is simple and $\La_\mu=\Lambda_{\mu,0,1}$.
\end{prop}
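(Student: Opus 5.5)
The strategy is separation of variables in spherical coordinates. The operator factorizes as $(\De-\mu)\big[(\De-\mu)+\La\big]u=0$, that is $(\De-\mu)(\De+(\La-\mu))u=0$. By Remark~\ref{oss:La>mu} we know $\La>\mu$, so we set $\omega=\sqrt{\La-\mu}>0$.

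\emph{Step 1: reduction to Bessel-type ODEs.} Expand an eigenfunction $u\in H^2_0(B_R)$ in spherical harmonics, $u(x)=\sum_{k,l}a_{k,l}(r)S_{k,l}(x/|x|)$. Each coefficient solves the radial version of the fourth-order ODE. Since the two operators $\De-\mu$ and $\De+\omega^2$ commute and $\mu\neq-\omega^2$, the kernel of their product splits as the sum of the two kernels; in the radial ODE this is the standard partial fraction argument. Hence each coefficient, restricted to solutions regular at the origin, is a combination
\[
a(r)=r^{-\eta}\big(\alpha I_{\eta_k}(\sqrt\mu\,r)+\beta J_{\eta_k}(\omega r)\big).
\]
The singular solutions $K_{\eta_k}$ and $Y_{\eta_k}$ are excluded by the $H^2$ regularity near $0$; interior elliptic regularity makes $u$ smooth, which justifies this.

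\emph{Step 2: the boundary conditions.} The conditions $a(R)=a'(R)=0$ give a $2\times2$ linear system in $(\alpha,\beta)$. The factor $r^{-\eta}$ drops out of the determinant once $a(R)=0$ is used. Applying the recurrences $I_\nu'(z)=I_{\nu+1}(z)+\frac{\nu}{z}I_\nu(z)$ and $J_\nu'(z)=-J_{\nu+1}(z)+\frac{\nu}{z}J_\nu(z)$, the $\nu/z$ terms cancel in the determinant. What remains is exactly \eqref{spectrum}, and the nontrivial solution gives the stated $u_{k,l,i}$. Conversely, each such function is an eigenfunction, and completeness of spherical harmonics shows this exhausts the spectrum.

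\emph{Step 3: location of the roots.} Set $F(\omega)=\omega J_{\eta_k+1}(\omega R)/J_{\eta_k}(\omega R)$, so that \eqref{spectrum} reads $F(\omega)=-\sqrt\mu\, I_{\eta_k+1}(\sqrt\mu R)/I_{\eta_k}(\sqrt\mu R)$. The right-hand side is a fixed negative constant. The ratio $J_{\nu+1}/J_\nu$ is strictly increasing on each interval between consecutive zeros $j_{\nu,i-1}<\omega R<j_{\nu,i}$; this follows from the Mittag-Leffler expansion $J_{\nu+1}(z)/J_\nu(z)=\sum_m 2z/(j_{\nu,m}^2-z^2)$, and the same expansion shows that $\omega\mapsto\omega J_{\nu+1}/J_\nu$ is also monotone there. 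On such an interval the function goes from $0$ at $\omega R=j_{\nu+1,i-1}$ (interlacing \eqref{eq:interlacing}) up to $+\infty$. Just after $j_{\nu,i-1}$ it takes values from $-\infty$, and it vanishes at $j_{\nu+1,i-1}$.

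This determines the root count. On $(j_{\nu,i},j_{\nu+1,i})$ the function $F$ increases from $-\infty$ to $0$. Hence it meets the negative constant exactly once there, and it has no negative values on the rest of the branch. This produces precisely one root $\La_{\mu,k,i}=\omega^2+\mu$ with $j_{\eta_k,i}^2/R^2+\mu<\La_{\mu,k,i}<j_{\eta_k+1,i}^2/R^2+\mu$, giving infinitely many solutions. The first interval $(0,j_{\nu,1})$ needs a separate check: there $F>0$, so it contains no root. Verifying these monotonicity and sign facts carefully is the main obstacle.

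\emph{Step 4: the smallest eigenvalue.} The lower bound for the root of index $k$ exceeds $j_{\eta,1}^2/R^2+\mu$ whenever $k\ge1$. For $k\ge1$, $\La_{\mu,k,1}>j_{\eta_k,1}^2/R^2+\mu\ge j_{\eta+1,1}^2/R^2+\mu>\La_{\mu,0,1}$, using monotonicity of $j_{\nu,1}$ in $\nu$. Hence the minimum is $\La_{\mu,0,1}$, attained only for $k=0$. Since $d_0=1$ and the root is unique in its interval, its multiplicity is one, so $\La_\mu$ is simple.
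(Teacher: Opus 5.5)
Your proposal is correct and follows essentially the paper's proof: the same splitting $u=\Lambda^{-1}\left[(\Delta+\Lambda-\mu)u-(\Delta-\mu)u\right]$ (the paper's $u=w_0-v/\Lambda$), the same regular profiles $r^{-\eta}I_{\eta_k}(\sqrt\mu\, r)$ and $r^{-\eta}J_{\eta_k}(\sqrt{\Lambda-\mu}\, r)$, the same $2\times2$ determinant, interlacing to place the roots, and $j_{\eta_k,1}\ge j_{\eta+1,1}$ for $k\ge1$ to isolate $\Lambda_{\mu,0,1}$. The only real variation is the root count, where your Mittag-Leffler monotonicity of $zJ_{\nu+1}(z)/J_\nu(z)$ between poles also explicitly rules out roots outside the intervals $(j_{\nu,i},j_{\nu+1,i})$, which the paper leaves implicit; note that it is the smoothness of $u$ (as you say), not $H^2$ regularity alone, that excludes the $K_{\eta_k}$, $Y_{\eta_k}$ terms, because their leading singularities can cancel in combination.
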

\begin{oss}
    We remark that by the interlacing property \eqref{eq:interlacing}, $\La_{\mu,k,i}$ is monotone in $k$ and in $i$; however, we are not able to infer the injectivity of the map $(k,i)\mapsto\La_{\mu,k,i}$.
\end{oss}
\begin{proof}
     Let $u$ be a weak solution on $B_R$ to \eqref{eq}. Letting $v=(\De-\mu)u$, we have that $v$ satisfies
    \[\int_{B_R} \nabla v\nabla \varphi\,dx=\left(\Lambda-\mu\right)\int_{B_R}v\varphi\,dx,\]
    for every $\varphi\in C^\infty_c(B_R)$, so that $v$ is $C^\infty$ inside $B_R$ and so is $u$. In particular,
    \[(\Delta-\mu)^2u=-\La(\Delta-\mu)u \quad\text{in }B_R,\]
    hence
    \begin{equation}\label{veq}-\De v = (\La-\mu)v\quad\text{in }B_R.\end{equation}
    As $\vp_0=-v/\La$ is a particular solution to $(\De-\mu)\vp_0 = v$, we have that $u$, which is a solution to
    \[\begin{cases}
    (\Delta-\mu)u=v &\text{in }B_R,\\[5pt]
    u=\partial_\nu u=0 &\text{on }\pa B_R,
    \end{cases}\]
    can be written as $u=w_0-v/\La$, where $w_0$ is a solution to
    \begin{equation}\label{uoeq}-\De w_0=-\mu w_0\quad\text{in }B_R.\end{equation}
    Expanding in Fourier series on every sphere, we can write 
    \[v(x)=\sum_{k=0}^{+\infty}\sum_{l=1}^{d_k} S_{k,l}\left(\dfrac{x}{\abs{x}}\right)a_{k,l}(|x|),
    \qquad \qquad w_0(x)=\sum_{k=0}^{+\infty}\sum_{l=1}^{d_k} S_{k,l}\left(\dfrac{x}{\abs{x}}\right)b_{k,l}(|x|).\]
    Since 
    \[
    a_{k,l}(r)=r^{1-d}\int_{\pa B_r}v(x)S_{k,l}\left(\frac{x}{|x|}\right)\,d\mathcal{H}^{d-1},
    \]
    using that $S_{k,l}$ is an eigenfunction for the Laplace-Beltrami operator on the sphere $\pa B_1$, and integrating by parts, we have that
    \[
    \begin{split}
        -\frac{\la_k}{r^2} a_{k,l}(r)&=r^{1-d}\int_{\pa B_r}(\De_{\pa B_r} v)\:S_{k,l}\,d\mathcal{H}^{d-1} = r^{1-d}\int_{\pa B_r}\left(\De v-\pa^2_{rr}v-\frac{d-1}{r}\pa_r v\right)\:S_{k,l}\,d\mathcal{H}^{d-1} \\ 
        &=(\mu-\La)a_{k,l}(r)-a''_{k,l}(r)-\frac{d-1}{r}a'_{k,l}(r),
    \end{split}
    \]
    where we used the equation~\eqref{veq} for $v$. Reasoning similarly for $w_0$, we get that
    \[
        -\frac{\la_k}{r^2}b_{k,l}(r)=\mu b_{k,l}(r)-b''_{k,l}(r)-\frac{d-1}{r}b'_{k,l}(r).
    \]
    By definition of $a_{k,l}$ and $b_{k,l}$, and using the regularity of $v,w_0$, we know that $a_{k,l}$ and $b_{k,l}$  they are bounded in $0$. Therefore, $a_{k,l}$ and $b_{k,l}$ can be written in terms of Bessel functions and modified Bessel functions of the first kinds, so that there exist constants $\ga_{k,l}$ and $\al_{k,l}$ such that 
    \[
a_{k,l}(r)=r^{-\eta}\ga_{k,l}J_{\eta_k}(\sqrt{\La-\mu}\,r)\qquad \qquad b_{k,l}(r)=r^{-\eta}\al_{k,l} I_{\eta_k}(\sqrt{\mu}\,r),\]
where we recall that $\eta=d/2-1$ and $\eta_k=\eta+k$. Therefore, $v$ and $w_0$ are of the form
    \[v(x)=\abs{x}^{-\eta}\sum_{k=0}^{+\infty}\sum_{l=1}^{d_k} \ga_{k,l}S_{k,l}\left(\dfrac{x}{\abs{x}}\right) J_{\eta_k}(\sqrt{\La-\mu}\,\abs{x})\]
    and
    \[w_0(x)=\abs{x}^{-\eta}\sum_{k=0}^{+\infty}\sum_{l=1}^{d_k} \al_{k,l}S_{k,l}\left(\dfrac{x}{\abs{x}}\right) I_{\eta_k}(\sqrt{\mu}\,\abs{x}).\]
    Setting $\beta_{k,l}=-\La^{-1}\ga_{kl}$ and recalling that $u=w_0-\La^{-1}v$, we have
    \[u(x)=\sum_{k=0}^{+\infty}\sum_{l=1}^{d_k} u_{k,l},\]
    with
    \[u_{k,l}(x)=\abs{x}^{-\eta} S_{k,l}\left(\dfrac{x}{\abs{x}}\right)\left[\al_{k,l} I_{\eta_k}(\sqrt{\mu}\,\abs{x})+\beta_{k,l}J_{\eta_k}(\sqrt{\La-\mu}\,\abs{x})\right].\]
Let us notice that, by construction, all the functions $u_{k,l}$ solve the equation $(\De-\mu)^2 u_{k,l}=-\La(\De-\mu)u_{k,l}$. Imposing the boundary conditions on the eigenfunction $u$, and using the orthogonality of the system $\Set{S_{k,l}}_{k,l}$, we have that $u$ is a non-trivial solution to the boundary value problem~\eqref{eqballu} if and only if at least one of the functions $u_{k,l}$ is non-zero and it satisfies the boundary conditions. Hence, using the differentiation formulae (see, for instance, \cite[Equations 9.1.30 and 9.6.28]{ASHandbook})
\begin{equation}
    \label{eq:derivativeBessel1}
\frac{d}{dz}\bigl(z^{-\nu}I_{\nu}(z)\bigr)=z^{-\nu}I_{\nu+1}(z),
\qquad
\frac{d}{dz}\bigl(z^{-\nu}J_{\nu}(z)\bigr)=-z^{-\nu}J_{\nu+1}(z),
\end{equation}
and imposing the boundary conditions on $u_{k,l}$, we have that $u_{k,l}$ is an eigenfunction if and only if the system 
\begin{equation}\label{system}\begin{cases}
     \al\,I_{\eta_k}(\sqrt\mu\,R)+\be\,J_{\eta_k}\left(\sqrt{\Lambda-\mu}\,R\right)=0,\\[0.7ex]
\al\left[\,\sqrt\mu\,R\,I_{\eta_k+1}(\sqrt\mu\,R)+kI_{\eta_k}(\sqrt\mu\,R)\right]
+\be\left[kJ_{\eta_k}(\sqrt{\La-\mu}\,R)-\sqrt{\Lambda-\mu}\,R\,J_{\eta_k+1}\left(\sqrt{\Lambda-\mu}\,R\right)\right] = 0
\end{cases}\end{equation}
admits a non-trivial solution $(\al,\be)$, that is, if and only if $\La$ is a solution to~\eqref{spectrum}.

To prove the existence of a solution to~\eqref{spectrum}, it suffices to notice that for every $k\in \N_0$ the function 
 \begin{equation*}
    F(t) = \frac{t}{R}\,I_{\eta_k}(\sqrt\mu\, R) J_{\eta_k+1}\left(t\right)+\sqrt\mu\,I_{\eta_k+1}(\sqrt\mu\, R) J_{\eta_k}\left(t\right),
 \end{equation*}
 by the interlacing property~\eqref{eq:interlacing} and the positivity of $I_\nu$, satisfies $F(j_{\eta_k,i})F(j_{\eta_k+1,i})<0$ for every $i\in \N$. 
 
 Therefore, the equation~\eqref{spectrum} admits at least one zero for $\sqrt{\La-\mu}\,R$ in the interval $(j_{\eta_k,i},j_{\eta_k+1,i})$. Moreover, recalling the recursive relations of Bessel functions (see \cite[formulae 9.1.27]{ASHandbook})
 \[J'_\nu(y)=\dfrac{\nu}{y}J_\nu(y)-J_{\nu+1}(y),\quad\text{and}\quad J'_{\nu+1}(y)=J_\nu(y)-\dfrac{\nu+1}{y}J_{\nu+1}(y);\]
 we also have that, for $t\in(j_{\eta_k,i},j_{\eta_k+1,i})$ , the functions
 \[t\mapsto t\,J_{\eta_k+1}(t),\quad\text{and}\quad t\mapsto J_{\eta_k}(t),\]
 have the same monotonicity, so that
 \begin{equation}
 \label{eq:F'>0}
     F'(t)>0, \qquad t\in(j_{\eta_k,i},j_{\eta_k+1,i}),
 \end{equation}
 and in particular, for $\sqrt{\La-\mu}\,R\in(j_{\eta_k,i},j_{\eta_k+1,i})$, the function $F(\sqrt{\La-\mu}R)$ is a monotone function of $\La$. Hence, for every $k\in\N_0$, equation \eqref{spectrum} admits exactly one zero for $\sqrt{\La-\mu}\,R$ in the interval $(j_{\eta_k,i},j_{\eta_k+1,i})$, that is,  the $i$-th zero of~\eqref{spectrum} must satisfy 
\[j_{\eta_k,i}<\sqrt{\La_{\mu,k,i}-\mu} \:R<j_{\eta_k+1,i}.\]
In particular, we have $J_{\eta_k}\left(\sqrt{\Lambda_{\mu,k,i}-\mu}\,R\right)\ne 0$, so that 
\[\al=I_{\eta_k}(\sqrt\mu\,R)^{-1},\quad\text{and}\quad\be=-J_{\eta_k}\left(\sqrt{\Lambda-\mu}\,R\right)^{-1}\]
is a non-trivial solution to~\eqref{system}. We then get 
\[u_{k,l,i}(x)=S_{k,l}\left(\dfrac{x}{\abs{x}}\right)\abs{x}^{-\eta}\left[\dfrac{I_{\eta_k}(\sqrt\mu\, \abs{x})}{I_{\eta_k}(\sqrt{\mu}\,R)}-\dfrac{J_{\eta_k}\left(\sqrt{\La_{\mu,k,i}-\mu}\,\abs{x}\right)}{J_{\eta_k}\left(\sqrt{\La_{\mu,k,i}-\mu}\,R\right)}\right]\]
is an eigenfunction of eigenvalue $\La_{\mu,k,i}$. Finally, as $j_{\eta+1,1}\le j_{\eta_k,i}$ for every $(k,i)\ne(0,1)$, we have that the smallest eigenvalue of~\eqref{eqballu}, $\Lambda_\mu$, is $\La_{\mu,0,1}$ which in particular is simple, the corresponding eigenfunction is radial and proportional to 
\begin{equation}
    \label{eq:firstEigenfunctionBall}
    u(x)=\abs{x}^{-\eta}\left[\dfrac{I_{\eta}(\sqrt\mu\, \abs{x})}{I_{\eta}(\sqrt{\mu}\,R)}-\dfrac{J_{\eta}\left(\sqrt{\La_{\mu}-\mu}\,\abs{x}\right)}{J_{\eta}\left(\sqrt{\La_{\mu}-\mu}\,R\right)}\right].\end{equation}
\end{proof}

\begin{oss}
Let us note that the first eigenfunction $u$ in~\eqref{eq:firstEigenfunctionBall} is positive and radially decreasing. Before proving it, let us stress that for fourth-order operators the sign of the first eigenfunction is strictly related to the geometry of the set, as the Krein-Rutman theorem cannot hold in general (we refer to~\cite{Coffman_1982,Kozlov_Kondratev_Mazya_1990,Coffman_Duffin_1992,Laurencot_Walker_2015} for the clamped plate eigenvalue and to~\cite{Kozlov_Kondratev_Mazya_1990,DCNT17,BP20} for the buckling; see also~\cite[Theorems 3.7-3.9]{Gazzola_Grunau_Sweers_2010}).

For simplicity, let us denote by
\[
    a:=\sqrt\mu,
    \qquad
    b:=\sqrt{\La_\mu-\mu}.
\]
With a slight abuse of notation, we identify the function $u$ with its radial profile, that is, $u(x)=u(\abs{x})$. We differentiate $u(r)$ with respect to $r$ using again~\eqref{eq:derivativeBessel1}, so that,
\[
    u'(r)=r^{-\eta}\left[
    a\frac{I_{\eta+1}(ar)}{I_\eta(aR)}
    +b\frac{J_{\eta+1}(br)}{J_\eta(bR)}
    \right].
\]
The equation~\eqref{spectrum} characterizing the first eigenvalue reads
\[
    b\frac{I_\eta(aR)}{J_\eta(bR)}=-a\frac{I_{\eta+1}(aR)}{J_{\eta+1}(bR)},
\]
and in particular we can rewrite $u'(r)$ as
\[
    u'(r)=\frac{a r^{-\eta}J_{\eta+1}(br)}{I_\eta(aR)}
    \left[
    \frac{I_{\eta+1}(ar)}{J_{\eta+1}(br)}
    -
    \frac{I_{\eta+1}(aR)}{J_{\eta+1}(bR)}
    \right].
\]
Since $bR<j_{\eta+1,1}$, we have $J_{\eta+1}(br)>0$ for every $r\in(0,R]$. Therefore, to prove the monotonicity of $u$ it is sufficient to show that the map
\[
    r\mapsto \frac{I_{\eta+1}(ar)}{J_{\eta+1}(br)}
\]
is strictly increasing on $(0,R)$.

We prove this monotonicity using the product formulae (see~\cite[Equations 9.5.10, and 9.6.3]{ASHandbook}). Letting $\nu=\eta+1$, we have
\[
    I_\nu(z)=\frac{(z/2)^\nu}{\Gamma(\nu+1)}
    \prod_{m=1}^\infty\left(1+\frac{z^2}{j_{\nu,m}^2}\right),
\]
and
\[
    J_\nu(z)=\frac{(z/2)^\nu}{\Gamma(\nu+1)}
    \prod_{m=1}^\infty\left(1-\frac{z^2}{j_{\nu,m}^2}\right),
\]
so that
\[
    \frac{I_\nu(ar)}{J_\nu(br)}
    =\left(\frac{a}{b}\right)^\nu
    \prod_{m=1}^\infty
    \frac{j_{\nu,m}^2+a^2r^2}{j_{\nu,m}^2-b^2r^2}.
\]
As $bR<j_{\nu,1}$, every factor is positive for $0<r\le R$. Moreover, recalling that  $a^2+b^2=\La_\mu>0$, we have that, for every $m$, the map
\[
    r\longmapsto
    \frac{j_{\nu,m}^2+a^2r^2}{j_{\nu,m}^2-b^2r^2} = 1+\frac{a^2+b^2}{b^2}\ml(\frac{j_{\nu,m}^2}{j_{\nu,m}^2-b^2r^2}-1\mr)
\]
is strictly increasing on $(0,R)$.
Hence $r\mapsto I_\nu(ar)/J_\nu(br)$ is strictly increasing on $(0,R)$, and consequently $u'(r)<0$ for every $r\in(0,R)$. Using the boundary condition $u(R)=0$, also gives $u(r)>0$ for $0<r<R$.
\end{oss}

We now turn to the $1$-dimensional case. 
\begin{prop}
\label{prop:spectrum1D}
    Let $d=1$, and fix $\mu>0$. Then the problem
    \[\begin{cases}
   -(u''-\mu u)''=(\La-\mu) (u''-\mu u)&\text{in }(-R,R),\\[5pt]
    u(\pm R)=0,\\[5pt]
    u'(\pm R)=0,
    \end{cases}\]
    admits $\La$ as an eigenvalue if and only if it solves either
    \begin{equation}\label{spectrum1d1}
    \sqrt{\La-\mu}\,\tan(\sqrt{\La-\mu}\,R)=-\sqrt{\mu}\,\tanh(\sqrt\mu\,R),
    \end{equation}
    or
        \begin{equation}\label{spectrum1d2}
\dfrac{\tan(\sqrt{\La-\mu}\,R)}{\sqrt{\La-\mu}}=\dfrac {\tanh(\sqrt\mu\,R)}{\sqrt\mu}.
    \end{equation}
    Moreover, denoting by $\La_{\mu,0,i}$ and $\La_{\mu,1,i}$ the $i$-th zero of equations \eqref{spectrum1d1} and \eqref{spectrum1d2} respectively, we have 
    \[\left(i-\dfrac{1}{2}\right)^2\dfrac{\pi^2}{R^2}+\mu<\La_{\mu,0,i}<\dfrac{i^2\pi^2}{R^2}+\mu,\quad\text{and}\quad\dfrac{i^2\pi^2}{R^2}+\mu<\La_{\mu,1,i}<\left(i+\dfrac{1}{2}\right)^2\dfrac{\pi^2}{R^2}+\mu \]
    with corresponding eigenfunctions
    \[u_{0,i}(x)=\dfrac{\cosh(\sqrt\mu\, x)}{\cosh(\sqrt\mu\, R)}-\dfrac{\cos\left(\sqrt{\La_{\mu,0,i}-\mu}\, x\right)}{\cos\left(\sqrt{\La_{\mu,0,i}-\mu}\, R\right)},\]
    and
    \[u_{1,i}(x)=\dfrac{\sinh(\sqrt\mu\, x)}{\sinh(\sqrt\mu\, R)}-\dfrac{\sin\left(\sqrt{\La_{\mu,1,i}-\mu}\, x\right)}{\sin\left(\sqrt{\La_{\mu,1,i}-\mu}\, R\right)}.\]
    In particular, the first eigenvalue $\La_\mu$ is given by $\La_{\mu,0,1}$.
\end{prop}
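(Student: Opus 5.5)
Set $a=\sqrt\mu$ and $b=\sqrt{\La-\mu}$, and copy the proof of \autoref{prop:spectrum} in one variable. Since $\La_\mu>\mu$ by \autoref{oss:La>mu}, every eigenvalue satisfies $\La>\mu$, so $b>0$. Put $v=u''-\mu u$. The equation becomes $-v''=b^2v$, so $v=\gamma_1\cos(bx)+\gamma_2\sin(bx)$. The function $-v/\La$ is a particular solution of $w''-\mu w=v$, because $(\partial_{xx}-\mu)(-v/\La)=(b^2+\mu)v/\La=v$. Hence
\[u(x)=\al_0\cosh(ax)+\al_1\sinh(ax)+\be_0\cos(bx)+\be_1\sin(bx).\]
The problem is symmetric under $x\mapsto-x$, so $u$ splits into an even part and an odd part, and each part satisfies the boundary conditions on its own. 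It is therefore enough to impose $u(R)=u'(R)=0$ on the even and odd ansatz separately.

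For the even part $\al_0\cosh(ax)+\be_0\cos(bx)$, a nontrivial solution exists iff the determinant vanishes:
\[\cosh(aR)\,\bigl(-b\sin(bR)\bigr)-a\sinh(aR)\cos(bR)=0 .\]
When $\cos(bR)\neq0$ this is exactly \eqref{spectrum1d1}. If $\cos(bR)=0$, the determinant reduces to $\mp b\cosh(aR)\neq0$, so that case cannot occur. For the odd part $\al_1\sinh(ax)+\be_1\sin(bx)$, the determinant is
\[\sinh(aR)\,b\cos(bR)-a\cosh(aR)\sin(bR)=0,\]
which is \eqref{spectrum1d2} once we know $\sin(bR)\neq0$ and $\cos(bR)\neq0$. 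Both degenerate cases again give a nonzero determinant: if $\sin(bR)=0$ it equals $\pm b\sinh(aR)\neq0$, and if $\cos(bR)=0$ it equals $\mp a\cosh(aR)\neq0$. Normalising the coefficients with $\cos(bR)\ne0$ and $\sin(bR)\ne0$ produces the eigenfunctions $u_{0,i}$ and $u_{1,i}$ written in the statement.

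To locate the roots, write $t=bR$ and $c=aR\tanh(aR)>0$.
\begin{itemize}
\item Equation \eqref{spectrum1d1} becomes $F(t)=t\sin t+c\cos t=0$. At $t=(i-\tfrac12)\pi$ and $t=i\pi$ the values of $F$ have opposite signs. On $((i-\tfrac12)\pi,i\pi)$ the function $t\tan t$ is continuous and strictly increasing from $-\infty$ to $0$, so it meets $-c$ exactly once. There are no roots on $(0,\pi/2]$ or on $[(i-1)\pi,(i-\tfrac12)\pi]$, because there $t\tan t\ge 0$ or $\cos t=0$, and $\cos t=0$ was already excluded.
\item Equation \eqref{spectrum1d2} becomes $\tan t/t=g:=\tanh(aR)/(aR)\in(0,1)$. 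On $(i\pi,(i+\tfrac12)\pi)$ the function $\tan t/t$ increases from $0$ to $+\infty$, so there is exactly one root there. It is negative on the intervals $((i-\tfrac12)\pi,i\pi)$. On $(0,\pi/2)$ we have $\tan t/t>1>g$, so there is no root there either.
\end{itemize}
These facts give the stated bounds for $\La_{\mu,0,i}$ and $\La_{\mu,1,i}$, and they show that the $i$-th roots are the only ones.

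Finally, compare the intervals: $\La_{\mu,0,1}<\pi^2/R^2+\mu$, while every other root exceeds $\pi^2/R^2+\mu$. So the first eigenvalue is $\La_{\mu,0,1}$. The main obstacle is making sure no root has been missed, in particular on the first interval $(0,\pi/2)$ for \eqref{spectrum1d2} and in the degenerate cases where $\sin$ or $\cos$ vanishes. The sign analysis above is meant to cover both.
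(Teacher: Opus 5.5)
Your proposal is correct and follows essentially the same route as the paper. The paper also writes down the four-parameter general solution and decouples the boundary conditions through $u(R)\pm u(-R)$ and $u'(R)\pm u'(-R)$, which is equivalent to your even/odd split. It then localizes the roots using the sign and monotonicity of $t\tan t$ and $\tan t/t$, as you do. Your explicit handling of $\Lambda>\mu$ and of the degenerate cases $\cos(bR)=0$, $\sin(bR)=0$ fills in details the paper leaves implicit.
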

\begin{proof}
    The general solution to the fourth-order equation
   \[-(u''-\mu u)''=(\La_\mu-\mu) (u''-\mu u), \]
is 
\[u(x)=\al_1\cos\left(\sqrt{\La-\mu}\,x\right)+\beta_1\cosh(\sqrt{\mu}\,x)+\al_2\sin\left(\sqrt{\La-\mu}\,x\right)+\beta_2\sinh(\sqrt{\mu}\,x).\]
By imposing the boundary conditions $u(R)+u(-R)=u(R)-u(-R)=0$ and $u'(R)+u'(-R)=u'(R)-u'(-R)=0$, we have the system
\[\begin{cases}
    \al_1\cos(\sqrt{\La-\mu}\,R)+\be_1\cosh(\sqrt\mu\,R)=0,\\[5pt]
    -\al_1\sqrt{\La-\mu}\,\sin(\sqrt{\La-\mu}\,R)+\be_1\sqrt\mu\,\sinh(\sqrt\mu\,R)=0,\\[5pt]
    \al_2\sin(\sqrt{\La-\mu}\,R)+\be_2\sinh(\sqrt\mu\,R)=0,\\[5pt]
    \al_2\sqrt{\La-\mu}\,\cos(\sqrt{\La-\mu}\,R)+\be_2\sqrt{\mu}\,\cosh(\sqrt\mu\,R)=0,
\end{cases}\]
Which admits non-trivial solutions if and only if
\[\sqrt{\mu}\,\sinh(\sqrt\mu\,R)\cos(\sqrt{\La-\mu}\,R)+\sqrt{\La-\mu}\,\cosh(\sqrt\mu\,R)\sin(\sqrt{\La-\mu}\,R)=0,
\]
or
\[
\sqrt{\mu}\,\cosh(\sqrt\mu\,R)\sin(\sqrt{\La-\mu}\,R)-\sqrt{\La-\mu}\,\sinh(\sqrt\mu\,R)\cos(\sqrt{\La-\mu}\,R)=0.
\]
which are exactly \eqref{spectrum1d1} and \eqref{spectrum1d2}.
Finally, the estimates on the eigenvalues follow trivially by considering the positivity intervals and monotonicity of the functions
\[t\mapsto t\tan(t),\quad\text{and }\quad t\mapsto\dfrac{\tan(t)}{t},\]
and by the fact that, for $0<\sqrt{\La-\mu}\,R<\pi/2$,  \eqref{spectrum1d2} does not admit solutions. 
\end{proof}
\begin{oss}\label{oss:mu=0}
    For every dimension $d$, the case $\mu=0$ reduces to the usual buckling eigenvalue problem. Even though we need $\mu>0$ in both the proofs of~\autoref{prop:spectrum} and~\autoref{prop:spectrum1D}, we can still argue similarly to compute the spectrum. Indeed, the main difference here is that the function $w_0$ is harmonic. In the case $d>1$, we replace $I_{\eta_k}(\sqrt{\mu}x)$ with $|x|^{\eta_k}=|x|^{\eta+k}$, so that
    \[w_0(x)=\sum_{k=0}^{+\infty}\sum_{l=1}^{d_k}\al_{k,l}S_{k,l}\left(\dfrac{x}{\abs{x}}\right)\abs{x}^{k},\]
    and
    \[u_{k,l}(x)=S_{k,l}\left(\dfrac{x}{\abs{x}}\right)\left[\al_{k,l} \abs{x}^{k}+\beta_{k,l}\abs{x}^{-\eta} J_{\eta_k}(\sqrt{\La}\,\abs{x})\right].\]
Then condition \eqref{spectrum} reduces to 
\begin{equation}\label{spectrum0}
    J_{\eta_k+1}(\sqrt\La\,R)=0,\end{equation}
that is $\La_{0,k,i}=j_{\eta_k+1,i}^2R^{-2}$, with eigenfunctions 
\[u_{k,l,i}(x)=\abs{x}^{-\eta} S_{k,l}\left(\dfrac{x}{\abs{x}}\right)\left[\left(\dfrac{\abs{x}}{R}\right)^{\eta_k}-\dfrac{J_{\eta_k}(j_{\eta_k+1,i} R^{-1}\,\abs{x})}{J_{\eta_k}(j_{\eta_k+1,i})}\right].\]
In the case $d=1$ we replace $\cosh(\sqrt{\mu}x)$ and $\sinh(\sqrt{\mu}x)$ with $1$ and $x$ respectively, and the general solution is given by
\[u(x)=\al_1\cos\left(\sqrt{\La}\,x\right)+\beta_1+\al_2\sin\left(\sqrt{\La}\,x\right)+\beta_2x.\]
Then, the eigenvalues are given by the equations
\[\La_{0,0,i}=\dfrac{i^2\pi^2}{R^2},\]
for $i\in\N$, with relative eigenfunctions
\[u_{0,i}(x)=1-\dfrac{\cos(\sqrt\La\, x)}{\cos(\sqrt\La\, R)};\] for the eigenvalues $\La_{0,1,i}$, instead, we look for the solutions to the analogous of~\eqref{spectrum1d2}, which is 
\[\tan\left(\sqrt{\La_{0,1,i}}\,R\right)=\sqrt{\La_{0,1,i}}\,R,\]
and the relative eigenfunctions are
\[u_{1,i}(x)=\dfrac{x}{R}-\dfrac{\sin\left(\sqrt{\La_{0,1,i}}\, x\right)}{\sin\left(\sqrt{\La_{0,1,i}}\, R\right)}.\]
\end{oss}

We conclude the section with the study of the minimization problem
\[\min_{\mu\ge0}\La_\mu(B_1).\]
\begin{prop}\label{minim}
    Let $\La_\mu(B_1)$ be the first eigenvalue of \eqref{spectrum} in $B_1\subset\R^d$, and let $\bar\mu$ be a minimizer of $\La_\mu$. Then either $\bar\mu=0$ or $d=1$ and $\bar\mu$ is the unique solution to the equation
   \begin{equation}
   \label{eq:optimalMu}
  \sqrt\mu\,\tanh(\sqrt\mu)=1.
   \end{equation}
\end{prop}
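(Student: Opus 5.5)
The plan is to study the function $\mu\mapsto\La_\mu(B_1)$ through the transcendental equations of \autoref{prop:spectrum} and \autoref{prop:spectrum1D}, and to show that it can have an interior critical point only when $d=1$ and $\sqrt\mu\tanh\sqrt\mu=1$. By \autoref{lemma:lipschitz-minimum} a minimizer $\bar\mu$ exists, so it suffices to show that an interior minimizer $\bar\mu>0$ forces $d=1$ and \eqref{eq:optimalMu}.

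\emph{Smoothness.} Write $a=\sqrt\mu$ and $b=\sqrt{\La-\mu}$, so that $\La=a^2+b^2$. The first eigenvalue is characterized by $\Phi(a,b)=0$, with $bR$ lying in the first interlacing interval. Here
\[
\Phi(a,b)=b\,I_\eta(a)J_{\eta+1}(b)+a\,I_{\eta+1}(a)J_\eta(b)
\]
for $d>1$ (with $R=1$), and $\Phi(a,b)=b\tan b+a\tanh a$ for $d=1$. By \eqref{eq:F'>0}, and by the monotonicity of $t\tan t$ in dimension one, $\partial_b\Phi\neq0$ on that interval. The implicit function theorem then gives that $b=b(a)$, and hence $\La_\mu$, is smooth in $\mu>0$. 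At an interior minimizer we must therefore have $\frac{d}{da}(a^2+b(a)^2)=0$, i.e.\ $b\,b'=-a$.

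\emph{The critical-point condition.} Substituting $b'=-a/b$ into $\partial_a\Phi+b'\,\partial_b\Phi=0$ turns stationarity into an explicit condition on $(a,b)$ on the curve $\Phi=0$. I would carry this out first for $d=1$, where it is elementary. Set $T=\tanh a$ and use $\tan b=-aT/b$ together with $\sec^2b=1+a^2T^2/b^2$. The derivative of $b\tan b+a\tanh a$ along $b'=-a/b$ becomes
\[
-\frac ab\Big(-\frac{aT}{b}+b+\frac{a^2T^2}{b}\Big)+T+a(1-T^2)=T(1-aT)\Big(1+\frac{a^2}{b^2}\Big).
\]
Since $T>0$ for $a>0$, this vanishes if and only if $a\tanh a=1$, which is \eqref{eq:optimalMu}. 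The function $t\mapsto t\tanh t$ is strictly increasing from $0$ to $\infty$, so this equation has a unique positive solution. This settles the case $d=1$.

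\emph{The case $d>1$ (main obstacle).} I would run the same computation with the Bessel recurrences \eqref{eq:derivativeBessel1} and $I'_\nu=I_{\nu+1}+\frac{\nu}{z}I_\nu$, $J'_\nu=\frac\nu zJ_\nu-J_{\nu+1}$, and use $\Phi=0$ to eliminate $J_\eta(b)/J_{\eta+1}(b)$. I expect the stationarity expression again to factor as a positive term times a factor depending only on $a$, schematically $1-\frac{a\,I_{\eta+1}(a)}{I_\eta(a)}+c_d$, with a correction $c_d$ proportional to $2\eta+1=d-1$ that vanishes exactly when $d=1$. The hard step is to show that for $d\ge2$ this factor has a strict sign for all $a>0$, so that $\frac{d}{d\mu}\La_\mu>0$ and no interior critical point exists, giving $\bar\mu=0$.

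If the factorization does not come out cleanly, I would fall back on the Hellmann--Feynman formula from the proof of \autoref{lemma:lipschitz-minimum}:
\[
\frac{d}{d\mu}\La_\mu=1+\frac{G(h)^2-L(h)I(h)}{(G(h)+\mu I(h))^2},
\]
evaluated at the radial, positive, decreasing eigenfunction of \eqref{eq:firstEigenfunctionBall}. I would then try to prove $G^2+(G+\mu I)^2>LI$ using the explicit Bessel form. For this, the Turán-type bounds on $I_{\eta+1}/I_\eta$ and the product formulae already used in the monotonicity remark seem the most promising tools.
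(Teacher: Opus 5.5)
Your one-dimensional computation is correct and is essentially the paper's. The derivative of $b\tan b+a\tanh a$ in the direction $(1,-a/b)$, tangent to $\{a^2+b^2=\La\}$, factors as $T(1-aT)(1+a^2/b^2)$. So an interior minimizer must solve $\sqrt\mu\tanh\sqrt\mu=1$, which is all the statement needs for $d=1$. The gap is the case $d>1$. There you only predict the shape of the factorization and call its sign ``the hard step''. You then fall back on Turán-type bounds or an unproved Hellmann--Feynman inequality $G(h)^2+(G(h)+\mu I(h))^2>L(h)I(h)$. As written, nothing is established for $d\ge2$. That is exactly the part of the proposition behind $\La(B_1^d\times\R^{n-d})=\La(B_1^d)$ for $d>1$ in \autoref{main2}.

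No delicate estimate is needed; you only have to carry your own elimination to the end. Take $\Phi(a,b)=b\,I_\eta(a)J_{\eta+1}(b)+a\,I_{\eta+1}(a)J_\eta(b)$ and apply the four recurrences. The $I_\eta J_\eta$ terms cancel identically, and
\[
\partial_a\Phi-\frac ab\,\partial_b\Phi=\frac{a^2+b^2}{ab^2}\Bigl[\eta b\,I_\eta(a)J_{\eta+1}(b)+ab\,I_{\eta+1}(a)J_{\eta+1}(b)-\eta a\,I_{\eta+1}(a)J_\eta(b)\Bigr].
\]
On $\Phi=0$ you may replace $-\eta a\,I_{\eta+1}(a)J_\eta(b)$ by $\eta b\,I_\eta(a)J_{\eta+1}(b)$. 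The stationarity expression then becomes
\[
\frac{a^2+b^2}{b}\,J_{\eta+1}(b)\Bigl[I_{\eta+1}(a)+\frac{2\eta}{a}I_\eta(a)\Bigr].
\]
Since $b<j_{\eta+1,1}$, we have $J_{\eta+1}(b)>0$. For $d\ge2$ we have $\eta=d/2-1\ge0$, so the bracket is strictly positive because $I_\nu>0$ on $(0,\infty)$. Hence $\La_\mu$ has no critical point in $\mu>0$, and $\bar\mu=0$. The paper phrases this last step differently: $J_{\eta+1}(b)=0$ would force $J_\eta(b)=0$ through $\Phi=0$, contradicting interlacing.

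Your schematic factor was in fact right, with $c_d=1-d$. Multiplying the bracket by $a/I_\eta(a)$ gives $\frac{aI_{\eta+1}(a)}{I_\eta(a)}+d-2$. Its sign is immediate for $d\ge2$, and for $d=1$ ($\eta=-1/2$) it equals $a\tanh a-1$, recovering your one-dimensional condition.
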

\begin{proof}
Let $d>1$, and let
\[
G(\Lambda,\mu)=\sqrt{\Lambda-\mu}\,I_{\eta}(\sqrt\mu)J_{\eta+1}(\sqrt{\Lambda-\mu})
+ \sqrt\mu\,J_{\eta}(\sqrt{\Lambda-\mu})I_{\eta+1}(\sqrt\mu),
\]
so that equation~\eqref{spectrum} for the first eigenvalue reads $G(\La_\mu,\mu)=0$. 
As discussed in \autoref{prop:spectrum} (see~\eqref{eq:F'>0} with $R=1$), the function $G(\La,\mu)$ is strictly monotone in $\La$ in the interval $(j_{\eta,1}^2+\mu,j_{\eta+1,1}^2+\mu)$, hence $\La_\mu$ is differentiable in every $\mu>0$, and we have that $\Lambda'_\mu=0$ if and only if $\pa_\mu G(\La,\mu)=0$ at $(\La_{\mu},\mu)$.\medskip
 
 Assume by contradiction that $\La_\mu$ attains its minimum at $\bar{\mu}>0$ so that $\pa_\mu G(\La,\mu)=0$ at $(\La_{\bar\mu},\bar\mu)$. Fix $\La>\mu$ and define
\[
x(\mu)=\sqrt\mu,
\qquad
y(\mu)=\sqrt{\Lambda-\mu},
\]
so that $x'=(2x)^{-1}$ and $y'=-(2y)^{-1}$. 
Differentiating $G$ we get
\begin{align*}
\partial_\mu G(\Lambda,\mu)
={}&-\frac{1}{2y}I_{\eta}(x)J_{\eta+1}(y)
+\frac{y}{2x}I_{\eta}'(x)J_{\eta+1}(y)
-\frac12 I_{\eta}(x)J_{\eta+1}'(y) \\
&+\frac{1}{2x}J_{\eta}(y)I_{\eta+1}(x)
-\frac{x}{2y}J_{\eta}'(y)I_{\eta+1}(x)
+\frac12 J_{\eta}(y)I_{\eta+1}'(x),
\end{align*}
Substituting the recurrence relations of Bessel functions (see \cite[formulae 9.1.27 and 9.6.26]{ASHandbook})
 \[\begin{split}J'_\nu(y)=\dfrac{\nu}{y}J_\nu(y)-J_{\nu+1}(y),&\qquad J'_{\nu+1}(y)=J_\nu(y)-\dfrac{\nu+1}{y}J_{\nu+1}(y),\\[5pt]
 I'_\nu(y)=\dfrac{\nu}{y}I_\nu(y)+I_{\nu+1}(y),&\qquad I'_{\nu+1}(y)=I_\nu(y)-\dfrac{\nu+1}{y}I_{\nu+1}(y),
 \end{split}\]and simplifying the expression of $\pa_\mu G$, we have
\[
\partial_\mu G(\Lambda,\mu)
=
\frac{x^2+y^2}{2x^2y^2}
\Bigl[
\eta y I_{\eta}(x)J_{\eta+1}(y)
-\eta x J_{\eta}(y)I_{\eta+1}(x)
+xy I_{\eta+1}(x)J_{\eta+1}(y)
\Bigr].
\]
Then, the condition $\partial_\mu G(\Lambda,\mu)=0$ is equivalent to
\begin{equation}
\label{eq:optimality1}
I_{\eta+1}(\sqrt\mu)J_{\eta+1}(\sqrt{\Lambda-\mu})
+\frac{\eta}{\sqrt\mu}I_{\eta}(\sqrt\mu)J_{\eta+1}(\sqrt{\Lambda-\mu})
-\frac{\eta}{\sqrt{\Lambda-\mu}}J_{\eta}(\sqrt{\Lambda-\mu})I_{\eta+1}(\sqrt\mu)=0.
\end{equation}
From $G(\La_\mu,\mu)=0$ we have,
\begin{equation}
\label{eq:optimality2}
\sqrt{\Lambda_\mu-\mu}\,I_{\eta}(\sqrt\mu)J_{\eta+1}\left(\sqrt{\Lambda_\mu-\mu}\right)
=-\sqrt\mu\,J_{\eta}\left(\sqrt{\Lambda_\mu-\mu}\right)I_{\eta+1}(\sqrt\mu),
\end{equation}
so that, substituting in~\eqref{eq:optimality1}, we arrive to
\[
J_{\eta+1}\left(\sqrt{\Lambda_\mu-\mu}\right)
\left[
I_{\eta+1}(\sqrt\mu)+\frac{2\eta}{\sqrt\mu}I_{\eta}(\sqrt\mu)
\right]=0.
\]
Thus, if $\La_\mu$ attains its minimum at $\bar\mu>0$, then 
\[J_{\eta+1}\left(\sqrt{\La_{\bar\mu}-\bar\mu}\right)=0.\] 
Substituting the previous condition in $G(\La,\mu)=0$, however, yields
\[J_\eta\left(\sqrt{\La_{\bar\mu}-\bar\mu}\right)=0,\]
that is, $\sqrt{\La_{\bar\mu}-\bar\mu}$ is a zero of both $J_{\eta+1}$ and $J_\eta$, which contradicts the interlacing property~\eqref{eq:interlacing} of the zeros of Bessel functions. Hence, if $d>1$, $\La_\mu$ attains its minimum at $\bar{\mu}=0$.\medskip

Let 
\[G(\La,\mu)=\sqrt{\La-\mu}\tan(\sqrt{\La-\mu})+\sqrt\mu\tanh(\sqrt\mu),\]
which is strictly increasing in $\La$ for $\sqrt{\La-\mu}\in(\pi/2,\pi)$, so that $\La_\mu'\ge0$ if and only if $\pa_\mu G(\La_\mu,\mu)\le0$. Differentiating with respect to $\mu$ yields
\[\begin{split}\pa_\mu G(\La,\mu)=&-\dfrac{\tan(\sqrt{\La-\mu})+\sqrt{\La-\mu}\left(1+\tan(\sqrt{\La-\mu})^2\right)}{2\sqrt{\La-\mu}}+\dfrac{\tanh(\sqrt{\mu})+\sqrt{\mu}\left(1-\tanh(\sqrt{\mu})^2\right)}{2\sqrt{\mu}}\\[10pt]
=&\dfrac{\sqrt{\mu}\tanh(\sqrt{\mu})}{2\mu}-\dfrac{\sqrt{\La-\mu}\tan(\sqrt{\La-\mu})}{2(\La-\mu)}-\dfrac{1}{2}\left(\tanh(\sqrt\mu)^2+\tan(\sqrt{\La-\mu})^2\right)
\end{split}\]
substituting $G(\La_\mu,\mu)=0$, we have
\[\pa_\mu G(\La_\mu,\mu)= \dfrac{\La_\mu\tanh(\sqrt{\mu})\left(1-\sqrt{\mu}\tanh(\sqrt\mu)\right)}{2\sqrt\mu(\La_\mu-\mu)}.\]
Hence, we have that $\La_\mu'\ge0$ if and only if
\[\sqrt\mu\tanh(\sqrt\mu)\ge1.\]
That is, if $d=1$, by the strict monotonicity of $x\mapsto x\tanh x$, the minimum point $\bar{\mu}$ is the unique solution to
\[\sqrt\mu\tanh(\sqrt\mu)=1.\]

\end{proof}

\begin{proof}[Proof of \autoref{main2}]
    The assertion follows from the previous proposition and the characterization of $\La(B_1\times\R^{n-d})$ given in \autoref{main}.
\end{proof}

\subsubsection*{Acknowledgements} 
We would like to thank Davide Buoso for his thorough read of this work and his fruitful suggestions.

The four authors are members of Gruppo Nazionale per l’Analisi Matematica, la Probabilità e le loro Applicazioni
(GNAMPA) of Istituto Nazionale di Alta Matematica (INdAM). 

The author Paolo Acampora was partially supported by the INdAM - GNAMPA Project, 2026,
”Equazioni alle Derivate Parziali: problemi inversi e stime quantitative”, CUP\_53C25002010001

The author Emanuele Cristoforoni was partially supported by the INdAM - GNAMPA Project, 2025,
”Esistenza, unicità, simmetria e stabilità per problemi ellittici nonlineari e nonlocali”, CUP\_E5324001950001; and by the INdAM - GNAMPA Project, 2026,
” Equazioni nonlocali e nonlineari: alcune questioni di esistenza, rigidita' e regolarita' ”, CUP\_E53C25002010001

\printbibliography[heading=bibintoc]
\Addresses
\end{document}